\newcommand{\sfd}{{\sf d}}
\newcommand{\restr}[1]{\lower3pt\hbox{$|_{#1}$}}
\newcommand{\ca}[1]{\overset{\circ}{{#1}}}
\newcommand{\Kliminf}{K\kern-3pt-\kern-2pt\mathop{\rm lim\,inf}\limits}  % Kuratowski liminf di insiemi
\newcommand{\Lip}{\mathop{\rm Lip}\nolimits}          %Lipschitz constant
\renewcommand{\d}{{\mathrm d}}
 \newcommand{\X}{{\rm X}}
  \newcommand{\R}{{\mathbb R}}
\newcommand{\limi}{\varliminf}
\newcommand{\LIP}{{\rm LIP}}
\newcommand{\lip}{{\rm lip}}
\newcommand{\loc}{{\rm loc}}
\newcommand{\mm}{\mathfrak m}                                %misura di riferimento
\newtheorem{theorem}{Theorem}[section]
\newtheorem{corollary}[theorem]{Corollary}
\newtheorem{lemma}[theorem]{Lemma}
\newtheorem{proposition}[theorem]{Proposition}
\theoremstyle{definition}
\newtheorem{remark}[theorem]{Remark}
\newtheorem{definition}[theorem]{Definition}
\newtheorem{example}[theorem]{Example}
\newtheorem{question}[theorem]{Question}
\begin{document}

\title[Approximation by BV-extension sets via perimeter minimization]{Approximation by BV-extension sets\\ via perimeter minimization in metric spaces}

\author{Jesse Koivu}
\author{Danka Lu\v{c}i\'c}
\author{Tapio Rajala}

\address{University of Jyvaskyla \\
         Department of Mathematics and Statistics \\
         P.O. Box 35 (MaD) \\
         FI-40014 University of Jyvaskyla \\
         Finland}

\email{jesse.j.j.koivu@jyu.fi}
\email{danka.d.lucic@jyu.fi}
\email{tapio.m.rajala@jyu.fi}

%\thanks{The author acknowledges the support from the Academy of Finland, grant no. 314789.}
\subjclass[2000]{Primary 30L99. Secondary 46E35, 26B30.}
\keywords{}
\date{\today}

%%%%%%%%%%%%%%%%%%%%%%%%%%%%%%%%%%%%%%%%%%%%%%%%%%%%%%%%%%%%%%%%%%%%%

%%%%%%%%%%%%%%%%%%%%%%%%%%%%%%%%%%%%%%%%%%%%%%%%%%%%%%%%%%%%%%%%%%%%%

\begin{abstract}
We show that every bounded domain in a metric measure space can be approximated in measure from inside by closed $BV$-extension sets. The extension sets are obtained by minimizing the sum of the perimeter and the measure of the difference between the domain and the set. By earlier results, in PI-spaces the minimizers have open representatives with locally quasiminimal surface. We give an example in a PI-space showing that the open representative of the minimizer need not be a $BV$-extension domain nor locally John. 
\end{abstract}

\maketitle

\section{Introduction}

In this paper we study the existence of $BV$-extension sets in complete and separable metric measure spaces $\X$. By $BV$-extension sets we mean sets $E$ for which any integrable function with finite total variation on $E$ can be extended to the whole space $\X$ without increasing the $BV$-norm by more than a constant factor. $BV$- and Sobolev-extension sets are useful in analysis because via the extension one can use tools a priori available only for globally defined functions also for the functions defined only in the extension set.
Not every domain of a space is an extension set, so in cases where one starts with functions defined on an arbitrary domain $\Omega$ one first approximates $\Omega$ from inside by an extension set, then restricts the functions to this set and then extends them as global functions. Such process immediately raises the question: when can we approximate a domain from inside by extension domains (or sets)?

In the Euclidean setting, an answer to this has been known for a long time.
For instance, from the works of Calder\'on and Stein \cite{cal1961,stein} we know that Lipschitz domains of $\mathbb R^n$ are $W^{1,p}$-extension domains for every $p\geq 1$. 
Any bounded domain in $\mathbb R^n$ can be easily approximated from inside and outside by Lipschitz domains. It was later observed that in a more abstract setting of PI-spaces (that is doubling metric measure spaces satisfying a local Poincar\'e inequality \cite{HK1998}; see Section \ref{sec:PI}), good replacements of Lipschitz domains are uniform domains.
In \cite{BS2007} it was shown that uniform domains in $p$-PI-spaces are $N^{1,p}$-extension domains, for $1 \le p < \infty$, for the Newtonian Sobolev spaces, and in \cite{L2015} it was shown that bounded uniform domains in 1-PI-spaces are BV-extension domains. Finally, in \cite{R2021} it was shown that in doubling quasiconvex metric spaces one can approximate domains from inside and outside by uniform domains. Since PI-spaces are quasiconvex \cite{C1999,K2003}, we conclude that in PI-spaces one can approximate domains by extension domains.

Recently there has been increasing interest in analysis in metric measure spaces $(\X,\sfd,\mm)$ without the PI-assumption.
However, the extendability of $BV$-functions seems to have been studied only in some specific cases, such as infinite dimensional Gaussian case \cite{BPS2014}. 
We continue into the direction of general metric measure spaces and show in Theorem \ref{thm:extensionsub} that even without the PI-assumption one can still approximate domains $\Omega$ from inside by closed $BV$-extension sets. It is not clear if an approach similar to the approximation by uniform domains could work in general metric measure spaces. Therefore, we take a completely different approach and obtain the extension set by minimizing the functional $A \mapsto {\rm Per}(A) + \lambda\mm(\Omega \setminus A)$ for a large parameter $\lambda>0$.  Section \ref{sec:approximation} contains the proof of Theorem \ref{thm:extensionsub} and remarks on the minimization procedure. Before it, in Section \ref{sec:preli} we recall and prove preliminary results on $BV$-functions and sets of finite perimeter. In Section \ref{sec:PI} we connect the minimization approach to domains with locally quasiminimal boundary in PI-spaces, and also show that in PI-spaces the open representatives of the minimizers of the functional, and consequently domains with locally quasiminimal boundary need not be $BV$-extension domains, nor locally John domains. In the final part of the paper, Section \ref{sec:q} we list open questions raised by our extension result.

\section{Preliminaries}\label{sec:preli}
We will always assume $(\X,\sfd,\mm)$ to be a metric measure space where $(\X,\sfd)$ is a complete and separable metric space and $\mm$ is a Borel measure that is finite on bounded sets.
The set of all Borel subsets of \(\X\) is denoted by \(\mathscr B(\X)\).
We define the open and the closed ball with center \(x\in \X\) and radius \(r>0\) by 
\[
B_r(x)\coloneqq \{y\in \X:\, {\sf d}(x,y)<r\}\quad
\text{ and }\quad
\bar{B}_r(x)\coloneqq \{y\in \X:\, {\sf d}(x,y)\leq r\},
\]
respectively. We shall denote by \({\rm LIP}(\X)\) the space of all Lipschitz functions on \(\X\) and by \({\rm Lip}(f)\) the (global) Lipschitz constant of \(f\in {\rm LIP}(\X)\). 
Given any \(f\in {\rm LIP}(\X)\) and  \(E\subset \X\) we set 
\({\rm Lip}(f;E)\coloneqq {\rm Lip}(f\restr{E})\). Having this notation at our disposal, the \emph{asymptotic Lipschitz constant} (or the \emph{asymptotic slope}) of a function \(f\in 
{\rm LIP}(\X)\) is a function  \({\rm lip}_a(f)\colon \X\to [0,+\infty)\) given by 
\[\lip_a(f)(x)\coloneqq \inf_{r>0}{\rm Lip}\big(f;B_r(x)\big)\quad \text{ for every }x\in \X.\]
Notice also that \({\rm lip}_a(f)\leq {\rm Lip}(f)\).
Given an open set \(A\subset \X\)
we will say that a function \(f\colon \X\to \R\)
is \emph{locally Lipschitz} on \(A\) if for every \(x\in A\) there exists \(r>0\) such that \(B_r(x)\subseteq A\) and 
\(f\restr{B_r(x)}\) is Lipschitz.
We denote the space of all locally Lipschitz functions on \(A\) by \({\rm LIP}_{\loc}(A)\).
\medskip

\paragraph{\bf Functions of bounded variation.}
We next recall the definition of  the space of functions of bounded variation (BV functions, for short), as well as some of the characterisations of the total variation (measure) associated with a BV function. The below presentation is based on \cite{DMthesis}.

\begin{definition}[Total variation]
Let $(\X,\sfd,\mm)$ be a metric measure space. Consider $f \in L^1_{\rm loc}(\mm)$. Given an open set $A \subset \X$, we define
\begin{equation*}
    |D f|_{\X}(A):= \inf \left\{ \liminf_{n\to\infty} \int_A \lip_a (f_n)\, \d \mm:\, f_n \in {\rm LIP}_{\rm loc}(A),\, f_n \to f \in L^1_{\rm loc}(\mm \restr{A}) \right\}. 
\end{equation*}
\end{definition}
We extend $|D f|_\X$ to all Borel sets as follows: given $B \in \mathscr{B}(\X)$, we define
\[ |D f|_\X:= \inf \left\{ |D f|_\X(A), B \subset A, A \text{ is an open set}\right\}. \]
With this construction, $|D f|_\X \colon \mathscr{B}(\X) \to [0,\infty)$ is a Borel measure, called the \emph{total variation measure} of $f$ (\cite[Thm.\ 3.4]{Mir03}).
It follows from the definition that, given an open set $A \subset \X$
\begin{equation}
    \label{eq:lsc_totvar_open}
    f_n \to f \quad \text{in }L^1_{\textrm{loc}}(\mm\restr A) \qquad \Rightarrow \qquad |D f|_\X(A) \le \limi_{n \to \infty} |D f_n|_\X(A).
\end{equation}
%\begin{definition}[The space BV(X)]
%Let \((\X,\sfd,\mm)\) be a metric measure space. The space of functions of bounded variations (shortly, BV fucntions) is denoted by \({\rm BV}(\X)\) and
%is defined as the space of those \(f\in L^1_{\loc}(\mm)\) such that \(|Df|_\X(\X)<+\infty\).
%\end{definition}
%
Given a Borel set $B\subset \X$ and $f \in L^1_{\textrm{loc}}(\mm\restr{B})$, we introduce the following notation:
\[
|D f|_B\coloneqq \text{ the total variation measure of \(f\) computed in the metric measure space }(\X,\sfd,\mm\restr{B}).
\]

\begin{definition}[The spaces $\ca{BV}(B)$ and $BV(B)$]
Let $(\X,\sfd,\mm)$ be a metric measure space. Let $B \subset \X$ be Borel. 
%Given $f \in L^1_{\textrm{loc}}(\mm\restr{B})$, 
%We define the space $\ca{BV}(B)$ to be set of functions $f \in L^1_{\textrm{loc}}(\mm\restr{B})$ for which $|D f|_B(B)<\infty$. 
We define
\[
\begin{split}
\ca{BV}(B):=&\big\{ f\in L^1_{\loc}(\mm\restr{B}):\,|D f|_B(B)<+\infty \big\},\\
BV(B):=&\big\{ f \in L^1(\mm\restr{B}):\,|D f|_B(B)<+\infty\big\}.
\end{split}
\]
We endow the space \(\ca{BV}(B)\) 
with the seminorm
 and the space 
\(BV(B)\) with the norm given by 
\[\|f\|_{\ca{BV}(B)}:= |D f|_B(B)\quad\text{and}\quad
\|f\|_{BV(B)}:=\|f\|_{L^1(\mm\restr{B})}+|D f|_B(B),
\]
respectively.
%We endow the space $BV(B)$ with the norm $\| u \|_{BV(B)}:=\| u \|_{L^1(\mm)}+|D u|_B(B)$. Similarly, we endow the space $\ca{BV}(B)$ with the seminorm $\| u \|_{\ca{BV}(B)}:= |D u|_B(B)$.
\end{definition}

\begin{remark}\label{rmk:lipX}
 The following characterisation of the total variation measure of the whole space will be useful for our purposes. 
 By \cite[Theorem 4.5.3]{DMthesis} we have that
\begin{equation}\label{eq:lip}
    |D f|_{\X}(\X):= \inf \left\{ \liminf_{n\to \infty} \int_\X \lip_a (f_n)\, \d \mm:\, f_n \in {\LIP}(\X),\, f_n \to f \in L^1_{\loc}(\mm) \right\}.
\end{equation}
 In general, we cannot restrict to globally Lipschitz functions when calculating the total variation measure: consider $A = (0,1)\cup(1,2) \subset \mathbb R$ and $f = \chi_{(0,1)}$. 
\end{remark}

We will use the following version of Lipschitz extensions where the asymptotic Lipschitz constant is preserved.

\begin{proposition}[{\cite[Theorem 1.1]{DMGP2020}}]\label{prop:globallipextension}
Let $(\X,\sfd)$ be a metric space, $C \subset \X$ a subset and $g \colon C \to \mathbb R$ a Lipschitz function. Then for every
$\varepsilon > 0$ there exists an $(\Lip(g) + \varepsilon)$-Lipschitz function $f \colon \X \to \mathbb R$ whose restriction to $C$ coincides with $g$ and such that
\[
\lip_a (g)(x) = \lip_a(f)(x) \qquad \text{ for every }x \in C.
\]
Moreover if $g$ is bounded (resp. with bounded support), then $f$ can be chosen to be bounded (resp. with bounded
support).
\end{proposition}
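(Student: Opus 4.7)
The inequality $\lip_a(f)(x)\geq\lip_a(g)(x)$ for $x\in C$ is automatic as soon as $f$ extends $g$: for every $r>0$ we have $\Lip(f;B_r(x))\geq\Lip(g;B_r(x)\cap C)$, and taking the infimum in $r$ gives the bound. The content of the proposition is thus to construct $f$ that is globally $(\Lip(g)+\varepsilon)$-Lipschitz and satisfies $\lip_a(f)(x)\leq L(x):=\lip_a(g)(x)$ at every $x\in C$. A preliminary step is to verify that $L\colon C\to[0,\Lip(g)]$ is upper semicontinuous: if $r>0$ satisfies $\Lip(g;B_r(x_0)\cap C)<L(x_0)+\eta$, then for every $y\in B_{r/2}(x_0)\cap C$ the inclusion $B_{r/2}(y)\cap C\subset B_r(x_0)\cap C$ yields $L(y)<L(x_0)+\eta$.

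\textbf{Variable-speed McShane extension.} I would build $f$ via the formula
\[
f(y)\;:=\;\inf_{x\in C}\bigl\{g(x)+\psi_x(\sfd(x,y))\bigr\},
\]
where the functions $\psi_x\colon[0,\infty)\to[0,\infty)$ are nondecreasing, satisfy $\psi_x(0)=0$ and $\Lip(\psi_x)\leq\Lip(g)+\varepsilon$, and are designed so that $(\mathrm{i})$ $|g(x)-g(x')|\leq\psi_x(\sfd(x,x'))$ for every $x,x'\in C$, which will force $f=g$ on $C$, and $(\mathrm{ii})$ $\lim_{t\to 0^+}\psi_x(t)/t=L(x)$, which will force the right pointwise behaviour of $\lip_a(f)$ on $C$. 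Concretely, the \emph{slope-deficit modulus}
\[
\omega_x(t)\;:=\;\sup\Bigl\{\tfrac{|g(x)-g(x')|}{\sfd(x,x')}-L(x)\,:\,x'\in C,\ 0<\sfd(x,x')\leq t\Bigr\}^+
\]
tends to $0$ as $t\to 0^+$ directly by the definition of $L(x)$; one then defines $\psi_x$ to coincide with $L(x)t+t\tilde\omega_x(t)$ on a neighbourhood of $0$ (using a nondecreasing regularization $\tilde\omega_x\geq\omega_x$), matched with $(\Lip(g)+\varepsilon)t$ for larger $t$, so that both $(\mathrm{i})$ and the global Lipschitz bound hold.

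\textbf{Verification, boundedness, and main obstacle.} With such $\psi_x$, $f$ is $(\Lip(g)+\varepsilon)$-Lipschitz as an infimum of functions with that Lipschitz constant, and $f=g$ on $C$ by $(\mathrm{i})$. For $\lip_a(f)(x_0)\leq L(x_0)$ with $x_0\in C$, fix $\eta>0$: the upper semicontinuity of $L$ together with $(\mathrm{ii})$ gives $\psi_{x'}(t)\leq(L(x_0)+\eta)t$ uniformly for $x'\in C$ close to $x_0$ and $t$ small, while contributions to the infimum coming from $x'$ far from $x_0$ do not attain the infimum on a sufficiently small ball around $x_0$. Therefore $\Lip(f;B_\rho(x_0))\leq L(x_0)+\eta$ for some $\rho>0$, and $\eta$ is arbitrary. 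The bounded case follows by truncating $f$ between $\pm\|g\|_\infty$, which preserves both the Lipschitz constant and the pointwise $\lip_a$ on $C$; for bounded support, one may restrict the infimum to $x\in C\cap\bar B_R(x_1)$ for $R$ large compared to $\supp g$. \emph{The main obstacle} is the simultaneous arrangement of $(\mathrm{i})$ and $(\mathrm{ii})$: since $L(x)$ is only the infimum of local Lipschitz constants of $g$ across scales, far-away points $x'\in C$ can require $\psi_x(t)$ to grow strictly faster than $L(x)t$ for moderate $t$, and one has to arrange that this fast growth happens strictly away from $t=0$ by exploiting upper semicontinuity of $L$.
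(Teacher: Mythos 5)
You have the easy half right (the inequality ${\rm lip}_a(f)\ge{\rm lip}_a(g)$ on $C$ is automatic, $L={\rm lip}_a(g)$ is upper semicontinuous on $C$, and $\omega_x(t)\to0$ as $t\to0^+$), and your overall strategy -- a variable-speed McShane formula $f(y)=\inf_{x\in C}\{g(x)+\psi_x(\sfd(x,y))\}$ -- is the natural one; note, though, that the paper does not prove this proposition at all (it is imported from \cite{DMGP2020}), and the actual proof there is substantially more delicate than your sketch. The gap is in the verification step: conditions (i) and (ii) do \emph{not} suffice. First, what controls ${\rm Lip}(f;B_\rho(x_0))$ is not the value bound $\psi_{x'}(t)\le (L(x_0)+\eta)t$ for small $t$, but the local slope of $\psi_{x'}$ near the \emph{positive} scale $t\approx\sfd(x',x_0)$, which your conditions do not constrain. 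Second, your claim that far competitors ``do not attain the infimum on a sufficiently small ball around $x_0$'' is unjustified: equality in (i) can hold, so a far point can be exactly active at $x_0$ and then dictate $f$ near $x_0$ with slope up to ${\rm Lip}(g)+\varepsilon$.

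Here is a concrete instance where every requirement you impose is met and the conclusion fails. Take $\X=\R$, $C=\{0\}\cup[1,2]$, $g(0)=0$, $g\equiv-1$ on $[1,2]$; then ${\rm Lip}(g)=1$ and $L\equiv 0$ on $C$ (in particular ${\rm lip}_a(g)(0)=0$). Choose, for every $x\in C$, the same modulus $\psi_x(t)=\max\{0,(1+\varepsilon)t-\varepsilon\}$: it is nondecreasing, vanishes at $0$, has ${\rm Lip}(\psi_x)=1+\varepsilon$, satisfies (i) (e.g.\ $\psi_x(\sfd(x,0))\ge1=|g(x)-g(0)|$ for $x\in[1,2]$), satisfies (ii) since $\psi_x(t)/t\to0=L(x)$, and fits your prescription with $\tilde\omega_x(t)=\psi_x(t)/t$, a nondecreasing majorant of $\omega_x$, matched to slope ${\rm Lip}(g)+\varepsilon$ at larger $t$. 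The resulting $f$ extends $g$, is $(1+\varepsilon)$-Lipschitz, but for $0<y<\varepsilon/(1+\varepsilon)$ the competitor $x=1$ gives $f(y)=-(1+\varepsilon)y$, so ${\rm lip}_a(f)(0)=1+\varepsilon\neq0={\rm lip}_a(g)(0)$. The missing idea is precisely the cross-point, cross-scale coupling: $\psi_x$ must be designed so that near every scale $t$ at which some $x'\in C$ sits with $|g(x)-g(x')|$ comparable to $\psi_x(\sfd(x,x'))$, its slope is close to ${\rm lip}_a(g)(x')$, with the catch-up to slope ${\rm Lip}(g)+\varepsilon$ pushed to intermediate scales where no such tight point exists; arranging this simultaneously for all points of $C$ (this is exactly where the $\varepsilon$-room is spent) is the real content of the theorem in \cite{DMGP2020}, and it is absent from your argument.
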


By combining Proposition \ref{prop:globallipextension} with Remark \ref{rmk:lipX} we get the following. 

\begin{corollary}\label{cor:closed}
Let \((\X,\sfd,\mm)\) be a metric measure space. Let $B \subset \X$ be closed and define $Y = (B,\sfd|_{B\times B},\mm|_B)$. Then $BV(B) = BV(Y)$ and the total variation measures $|Df|_B$ and $|Df|_Y$ agree on the Borel subsets of $B$ for every $f \in BV(B)$. Moreover,
\begin{equation}\label{eq:lip2}
    |D f|_B(B) = \inf \left\{ \liminf_{n\to \infty} \int_B \lip_a (f_n)\, \d \mm:\, f_n \in {\rm LIP}(\X),\, f_n \to f \in L^1_{\rm loc}(\mm \restr{B}) \right\}. 
\end{equation}
\end{corollary}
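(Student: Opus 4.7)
My plan is to establish the ``Moreover'' formula \eqref{eq:lip2} first, then derive the top-level equality $|Df|_B(B) = |Df|_Y(Y)$ using Proposition \ref{prop:globallipextension}, and finally extend to all Borel subsets by a complementation argument based on finite total mass. The formula \eqref{eq:lip2} follows from Remark \ref{rmk:lipX} applied to the metric measure space $(\X,\sfd,\mm\restr{B})$: since $B$ is closed, the open set $\X\setminus B$ has zero $\mm\restr{B}$-measure, and approximating by the zero sequence shows $|Df|_B(\X\setminus B)=0$, so $|Df|_B(\X)=|Df|_B(B)$ and the integrals on $\X$ against $\mm\restr{B}$ reduce to integrals on $B$ against $\mm$.

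Next, I would prove the one-sided inequality $|Df|_B(E)\ge |Df|_Y(E)$ for every Borel $E\subset B$. By the definition of the Borel extension of $|Df|_B$, it suffices to show $|Df|_B(A)\ge|Df|_Y(A\cap B)$ for every open $A\subset\X$. Given an approximating sequence $f_n\in\LIP_{\loc}(A)$ for $|Df|_B(A)$, the restrictions $f_n|_{A\cap B}$ are locally Lipschitz on $A\cap B$ viewed as an open subset of $Y$ (since for $x\in A\cap B$ any $\X$-ball around $x$ contained in $A$ intersects $B$ in a $Y$-ball), they converge to $f$ in the appropriate $L^1_{\loc}$, and $\lip_a^Y(f_n|_{A\cap B})(x)\le \lip_a^\X(f_n)(x)$ for every $x\in A\cap B$, because the $Y$-ball $B_r^Y(x)=B_r^\X(x)\cap B$ is contained in the $\X$-ball $B_r^\X(x)$. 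Integrating and taking $\liminf$ gives the inequality for open sets, which the definition then transfers to all Borel $E$.

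The matching top-level inequality $|Df|_B(B)\le|Df|_Y(Y)$ is the heart of the argument. Since $B$ is closed in the complete separable space $\X$, the triple $Y$ is itself a complete separable metric measure space, so Remark \ref{rmk:lipX} applies and yields $|Df|_Y(Y)$ as an infimum over sequences $g_n\in\LIP(Y)$ with $g_n\to f$ in $L^1_{\loc}(\mm|_B)$. Applying Proposition \ref{prop:globallipextension} with $C=B$, each $g_n$ admits an extension $\tilde g_n\in\LIP(\X)$ satisfying $\tilde g_n|_B=g_n$ and $\lip_a^\X(\tilde g_n)(x)=\lip_a^Y(g_n)(x)$ for every $x\in B$. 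These extensions are admissible for \eqref{eq:lip2}, and the pointwise identity gives $\int_B\lip_a^\X(\tilde g_n)\,\d\mm=\int_Y\lip_a^Y(g_n)\,\d\mm|_B$, which yields the desired inequality upon passing to $\liminf$ and infima.

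Combining the two directions at $E=B$ gives $|Df|_B(B)=|Df|_Y(Y)$ and therefore $BV(B)=BV(Y)$. For $f\in BV(B)$, both measures are then finite with equal total mass; applying the easy Borel inequality of the second paragraph to an arbitrary Borel $E\subset B$ \emph{and} to its complement $B\setminus E$, and adding, forces both inequalities to be equalities, so $|Df|_B(E)=|Df|_Y(E)$ on all Borel subsets. The main technical obstacle is the extension step in the third paragraph; a potential concern is whether one needs a local analogue of Proposition \ref{prop:globallipextension} to treat open subsets directly, but the finiteness-based complementation trick elegantly reduces the extension argument to the global whole-space case, exactly the regime covered by Proposition \ref{prop:globallipextension}.
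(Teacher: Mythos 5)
Your proposal is correct and follows essentially the same route as the paper: Proposition \ref{prop:globallipextension} together with Remark \ref{rmk:lipX} gives the top-level equality, restriction of locally Lipschitz functions gives $|Df|_Y(A\cap B)\le|Df|_B(A)$, and the complementation argument using finiteness of the total variation upgrades this to equality on all Borel sets. The only (harmless) difference is that you obtain \eqref{eq:lip2} at the outset by applying Remark \ref{rmk:lipX} to $(\X,\sfd,\mm\restr{B})$ and noting $|Df|_B(\X\setminus B)=0$, whereas the paper deduces it at the end from the Borel-set equality with $A=B$.
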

\begin{proof}
By Proposition \ref{prop:globallipextension} every $f \in {\LIP}(B)$ can be extended to an element of ${\LIP}(\X)$ without changing the asymptotic Lipschitz constant on $B$, thus (taking into account Remark \ref{rmk:lipX}) we obtain 
\begin{equation}\label{eq:fromextension}
|Df|_B(\X) =|Df|_Y(Y),
\end{equation}
and thus \(BV(B)=BV(Y)\) (cf.\ \cite[Theorem 3.1]{DMGP2020}).

Now, take $A \subset X$ open. Since every
$f \in {\LIP}_{\rm loc}(A)$ can be restricted to an element of ${\LIP}_{\rm loc}(B\cap A)$, we get that
\begin{equation}\label{eq:trivial}%hahah, luckily
|Df|_Y(B \cap A) \le |Df|_B(A).
\end{equation}
By the definition of total variation measure, the inequality \eqref{eq:trivial} extends to all Borel sets $A \subset \X$.
Finally, by \eqref{eq:fromextension} and recalling that $|Df|_Z$ is a finite Borel measure for any metric measure space \((Z,\sfd_Z,\mm_Z)\), we have for all Borel $A \subset \X$ that
\[
|Df|_B(\X) \le |Df|_Y(B) = |Df|_Y(A\cap B) + |Df|_Y(B\setminus A) \le |Df|_B(A) + |Df|_B(\X\setminus A) = |Df|_B(\X)
\]
giving the equality
\[
|Df|_Y(A\cap B) = |Df|_B(A).
\]
The equality \eqref{eq:lip2} follows by taking \(A=B\)
in the above equality,
%the equality of %$|Df|_B$ and $|Df|_Y$ 
combined with Remark \ref{rmk:lipX} and Proposition \ref{prop:globallipextension}.
\end{proof}

We define the notion of sets of finite perimeter on a Borel subset $B \subset \X$.
\begin{definition}[Sets of finite perimeter on a Borel subset B]
    Let $(\X,\sfd,\mm)$ be a metric measure space and $B,E\in\mathscr B(\X)$. 
    We define the \emph{perimeter of} \(E\) \emph{on} \(B\) as 
    \[{\rm Per}_B(E)\coloneqq |D\chi_E|_B(B).\]
    We say that $E$ has finite perimeter on $B$ if the quantity ${\rm Per}_{B}(E)$ is finite.
    %where $P_{B}(E) := |D \chi_E|_B(B)$ (where the total variation is computed in the metric measure space $(\X,\sfd,\mm\restr{B})$).
    Moreover, we define for every \(F\in \mathscr B(\X)\) the quantity  ${\rm Per}_B(E;F):= |D \chi_E|_B(B \cap F)$.
\end{definition}
To shorten the notation, whenever \(B\) is equal to  the whole (base) space \(\X\), we will often write 
    \({\rm Per}(E)\) instead of \({\rm Per}_\X(E)\).
\medskip

\paragraph{\bf Extension sets and extension properties.}

\begin{definition}[\(BV\)-extension set]
A set \(B\in \mathscr B(\X)\) is said to be a \(BV\)-extension set if there exist
\(C>0\) and a map \(E_B\colon BV(B)\to BV(\X)\), 
such that for every \(f\in BV(B)\) the following hold:
\begin{itemize}
    \item [i)] \(\|E_Bf\|_{BV(\X)}\leq C\|f\|_{BV(B)}\);
    \smallskip
    
    \item [ii)] \(E_Bf\restr{B}=f\).
\end{itemize}
\end{definition}

Given a \(BV\)-extension set \(B\), we define the operator norm of  \(E_B\) as
\[
\|E_B\|\coloneqq 
\inf \big\{c\geq 0:\, \|E_Bf\|_{BV(\X)}\leq c\,\|f\|_{BV(B)}\, \text{ holds for all } f\in BV(B)\big\}.
\]

\begin{definition}[Extension property for sets of finite perimeter]
Let \(B\in \mathscr B(\X)\). We say that
\(B\) has \emph{the extension property for sets of finite perimeter} with respect to the full \(BV\)-norm if there exists \(C>0\) such that  
for every \(E\subset B\) with \({\rm Per}_B(E)<+\infty\) there exists \(\widetilde E\in \mathscr B(\X)\) such that the following two properties hold:
\begin{itemize}
    \item [i)] \(\mm(\widetilde E)+{\rm Per}(\widetilde E)\leq C\big(\mm(E)+{\rm Per}_B(E)\big)\)
    \smallskip
    
    \item [ii)] \(\mm(E\Delta (\widetilde E\cap B))\)=0.
\end{itemize}
    
\end{definition}

\section{Approximation by BV-extension sets from inside}
\label{sec:approximation}

In this section we prove the main result of the paper, Theorem \ref{thm:extensionsub}, according to which we can estimate domains from inside by closed $BV$-extension sets. In the proof we will need the following two results. The first one connects the extendability of $BV$-functions with the extendability of sets of finite perimeter. In the Euclidean case, such result was obtained by Burago and Maz’ya \cite{BM1969}. Later it was extended to PI-spaces by Baldi and Montefalcone \cite{BM2008}. The connection of perimeter- and $BV$-extensions with $W^{1,1}$-extensions was studied in detail in \cite{GBR2022} in Euclidean spaces, and then in general metric measure spaces in \cite{CKR2023}. In \cite[Proposition 3.4]{CKR2023} the extension result closest to what we need was proven. There a Borel set was shown to be a $BV$-extension set if and only if it has the extension property for Borel sets of finite perimeter with the full norm. We need to make a small modification to this result, since in our proof we need to stay in the class of closed sets and consequently will only use open sets for testing the perimeter extensions.

\begin{proposition}\label{prop:BV}
Let \((\X,\sfd,\mm)\) be a metric measure space. A Borel subset $\Omega \subset \X$ has the extension property for $BV$ if and only if it has the extension property for open sets of finite perimeter with the full norm.
\end{proposition}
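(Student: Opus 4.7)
The plan is to split the biconditional into its two implications. The forward implication---$BV$-extension implies open perimeter extension---is the easier one and goes by a standard coarea/averaging argument. Given an open set $E\subset\Omega$ with ${\rm Per}_\Omega(E)<\infty$, I would apply the $BV$-extension operator to $\chi_E\in BV(\Omega)$ to obtain $g\in BV(\X)$ with $g|_\Omega=\chi_E$ and $\|g\|_{BV(\X)}\leq C\big(\mm(E)+{\rm Per}_\Omega(E)\big)$. Truncating $g$ to take values in $[0,1]$ does not increase the $BV$-norm, and by combining the coarea identity $\int_0^1{\rm Per}(\{g>t\})\,\d t\leq|Dg|_\X(\X)$ with $\int_0^1\mm(\{g>t\})\,\d t=\|g\|_{L^1(\X)}$ and a Chebyshev-type averaging, I can select a level $t\in(0,1)$ such that $\widetilde E:=\{g>t\}$ satisfies condition (i). Since $g|_\Omega=\chi_E$, condition (ii) holds for every such $t\in(0,1)$.

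For the converse, the plan is to follow the proof of \cite[Proposition 3.4]{CKR2023}, observing that the sets whose extensions are invoked there are already open superlevel sets of locally Lipschitz functions, so the weaker open-set hypothesis is sufficient. Concretely, given $f\in BV(\Omega)$ I would split $f=f^+-f^-$ and reduce to nonnegative bounded $f$. By~\eqref{eq:lip2} there exist $f_n\in\LIP_{\loc}(\Omega)$ with $f_n\to f$ in $L^1_{\loc}(\mm\restr\Omega)$ and $\int_\Omega\lip_a(f_n)\,\d\mm\to|Df|_\Omega(\Omega)$, which after truncation may be taken with values in $[0,\|f\|_\infty]$. For each $n$ and almost every $t>0$, the superlevel set $U_{n,t}:=\{f_n>t\}$ is open in $\Omega$ with ${\rm Per}_\Omega(U_{n,t})<\infty$, so the hypothesis produces Borel extensions $\widetilde U_{n,t}\subset\X$. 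Setting
\[
\tilde f_n(x):=\int_0^\infty\chi_{\widetilde U_{n,t}}(x)\,\d t,
\]
the layer-cake identity yields $\tilde f_n|_\Omega=f_n$ $\mm$-a.e., while Tonelli's theorem combined with the coarea formula $|Df_n|_\Omega(\Omega)=\int_0^\infty{\rm Per}_\Omega(U_{n,t})\,\d t$ and the subadditivity $|D\tilde f_n|_\X(\X)\leq\int_0^\infty{\rm Per}(\widetilde U_{n,t})\,\d t$ delivers the uniform bound $\|\tilde f_n\|_{BV(\X)}\leq C\|f_n\|_{BV(\Omega)}$. A weak $L^1$ compactness argument together with the lower semicontinuity~\eqref{eq:lsc_totvar_open} then produces a limit $\tilde f\in BV(\X)$ extending $f$ with $\|\tilde f\|_{BV(\X)}\leq C\|f\|_{BV(\Omega)}$.

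The main obstacle I anticipate is the measurable choice $t\mapsto\widetilde U_{n,t}$, since the extensions provided by the hypothesis are not canonical, so it is not a priori clear that $\tilde f_n$ is a Borel function. This should be handled either by performing the construction only along a countable dense set of levels and filling in via a monotonicity/limit argument, or by invoking a measurable selection theorem as in~\cite{CKR2023}; in either case it is technical rather than conceptual. A secondary point is the justification of the layer-cake subadditivity for the total variation, which follows from~\eqref{eq:lsc_totvar_open} by approximating the $t$-integral through finite Riemann sums of characteristic functions.
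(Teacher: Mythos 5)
Your proposal is correct and follows essentially the same route as the paper: the converse direction is the same CKR2023-style layer-cake argument applied to the relatively open superlevel sets of (locally) Lipschitz approximations, with a Mazur/weak-compactness limit, and the forward direction is the standard coarea/averaging selection that the paper simply delegates to \cite[Proposition 3.4]{CKR2023}. One small repair: since $\Omega$ is only assumed Borel, \eqref{eq:lip2} (proved for closed sets) is not the right source for the approximations $f_n$; instead, as in the paper, use the definition of $|Df|_\Omega(\Omega)$ via open neighbourhoods $U_n\supset\Omega$, which produces $f_n\in\LIP_{\loc}(U_n)$ whose superlevel sets intersected with $\Omega$ are still relatively open, so the rest of your argument goes through unchanged.
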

\begin{proof}
Having already the equivalence between $BV$-extension and perimeter extension of Borel sets given by \cite[Proposition 3.4]{CKR2023}, we only need to show that perimeter extension for open sets implies $BV$-extension for functions in $BV(\Omega) \cap L^\infty(\Omega)$.
Towards this, take $f \in BV(\Omega) \cap  L^\infty(\Omega)$. By the definition of the total variation, there exists a sequence of open sets $U_n \supset \Omega$ and functions $f_n \in {\rm LIP}_{\rm loc}(U_n)$ such that $f_n \to f$ in $L^1_{\rm loc}(\mm \restr{\Omega})$ and
\[
\liminf_{n\to \infty} \int_\Omega \lip_a (f_n)\, \d \mm = |Df|_\Omega(\Omega).
\]
Now, by assumption we can extend each relatively open set $A_{n,t} = \{x \in \Omega \,:\, f_n(x) > t\}$ to a Borel set $\tilde A_{n,t} \subset \X$ so that
\[
\mm(\tilde A_{n,t}) + {\rm Per}_\X(\tilde A_{n,t}) \le C\left( \mm(A_{n,t}) + {\rm Per}_\Omega(A_{n,t})\right),
\]
where $C>0$ is the constant given by the assumption on having the extension property for open sets.

As in the proof of \cite[Proposition 3.4]{CKR2023}, this implies that we get an extension $\tilde f_n \in BV(\X)$ of $f_n$ with 
\[
\|\tilde f_n\|_{BV(\X)} \le C \|f_n\|_{BV(\Omega)}.
\]
By an application of Mazur's lemma (see again the proof of \cite[Proposition 3.4]{CKR2023} for details), this implies that we also get an extension $\tilde f \in BV(\X)$ of $f$ with 
\[
\|\tilde f\|_{BV(\X)} \le C \|f\|_{BV(\Omega)}.
\]
This concludes the proof.
\end{proof}

The next lemma is the reason why our approach works only for closed sets. Later in Example \ref{ex:simple} we observe that the claim of the lemma fails for general sets $B\subset \X$.

\begin{lemma}\label{lma:perimetersum}
Let \((\X,\sfd,\mm)\) be a metric measure space. Given a closed set \(B \subset \X\) and a set \(A \subset B\) of finite perimeter on \(B\), it holds that 
\begin{equation}\label{eq:perimetersum}
{\rm Per}(A)+{\rm Per}(B\setminus A)
\leq {\rm Per}(B)+2{\rm Per}_B(A).
\end{equation}
\end{lemma}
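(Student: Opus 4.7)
The plan is to approximate $\chi_A$ and $\chi_{B\setminus A}$ simultaneously on $\X$ by products of Lipschitz functions and then invoke the lower semicontinuity \eqref{eq:lsc_totvar_open}. By Corollary~\ref{cor:closed} (which uses that $B$ is closed), choose $k_n\in\LIP(\X)$ with $0\le k_n\le 1$, $k_n\to\chi_A$ in $L^1_{\loc}(\mm\restr{B})$, and $\int_B\lip_a(k_n)\,\d\mm\to{\rm Per}_B(A)$. By Remark~\ref{rmk:lipX} choose also $h_n\in\LIP(\X)$ with $0\le h_n\le 1$, $h_n\to\chi_B$ in $L^1_{\loc}(\mm)$, and $\int_\X\lip_a(h_n)\,\d\mm\to{\rm Per}(B)$. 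Set $U_n:=h_nk_n$ and $V_n:=h_n(1-k_n)$, both in $\LIP(\X)$; since $A\subset B$ and $0\le U_n,V_n\le h_n\to 0$ a.e.\ on $\X\setminus B$, dominated convergence on bounded subsets (after extracting a common $\mm$-a.e.-convergent subsequence) yields $U_n\to\chi_A$ and $V_n\to\chi_{B\setminus A}$ in $L^1_{\loc}(\mm)$.

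The analytical key is the pointwise product rule for the asymptotic slope, valid for bounded continuous $f,g\colon\X\to[0,1]$:
\[
\lip_a(fg)(x)\le f(x)\lip_a(g)(x)+g(x)\lip_a(f)(x),
\]
derived from $\Lip(fg;B_r(x))\le\sup_{B_r(x)}f\cdot\Lip(g;B_r(x))+\sup_{B_r(x)}g\cdot\Lip(f;B_r(x))$ by letting $r\downarrow 0$ and using continuity. Applied to $U_n$ and $V_n$ and summed (using $\lip_a(1-k_n)=\lip_a(k_n)$), this gives
\[
\lip_a(U_n)+\lip_a(V_n)\le 2h_n\lip_a(k_n)+\lip_a(h_n)\quad\text{on }\X.
\]
Integrating and combining with \eqref{eq:lsc_totvar_open} applied to both $U_n$ and $V_n$, and recalling $\int_\X\lip_a(h_n)\,\d\mm\to{\rm Per}(B)$, one arrives at
\[
{\rm Per}(A)+{\rm Per}(B\setminus A)\le 2\limsup_{n\to\infty}\int_\X h_n\lip_a(k_n)\,\d\mm+{\rm Per}(B),
\]
and since $h_n\le 1$, splitting the integral at $B$ shows that the $B$-piece is bounded by $\int_B\lip_a(k_n)\,\d\mm\to{\rm Per}_B(A)$.

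The main obstacle is the residual piece $\int_{\X\setminus B}h_n\lip_a(k_n)\,\d\mm$: the asymptotic slope $\lip_a(k_n)$ is controlled only on $B$, and the mere pointwise decay $h_n\to 0$ on $\X\setminus B$ does not suffice. This I will handle by a diagonal choice of $h_n$: for each $n$ the global Lipschitz constant $M_n:=\Lip(k_n)$ is finite, and using that $B$ is closed, each $h_n$ can be taken from within its approximating family in a way that concentrates its support near $B$ (for instance by intersecting a candidate with a cutoff of the form $\max(0,1-m\sfd(\cdot,B))$ for $m=m(n)$ large enough) so that, in addition to $\int_\X\lip_a(h_n)\,\d\mm\to{\rm Per}(B)$ and $h_n\to\chi_B$ in $L^1_{\loc}(\mm)$, we secure $M_n\int_{\X\setminus B}h_n\,\d\mm<1/n$. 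Then $\int_{\X\setminus B}h_n\lip_a(k_n)\,\d\mm\le M_n\int_{\X\setminus B}h_n\,\d\mm\to 0$, yielding $\limsup_n\int_\X h_n\lip_a(k_n)\,\d\mm\le{\rm Per}_B(A)$ and hence \eqref{eq:perimetersum}.
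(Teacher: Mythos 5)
Your skeleton is the same as the paper's: approximate $\chi_B$ by $h_n$ and $\chi_A$ by globally Lipschitz $k_n$ via Corollary \ref{cor:closed}, form $h_nk_n$ and $h_n(1-k_n)$, use the Leibniz bound for $\lip_a$ together with the lower semicontinuity \eqref{eq:lsc_totvar_open}, and split the cross term at $B$. The genuine gap is in your treatment of the residual term $\int_{\X\setminus B}h_n\,\lip_a(k_n)\,\d\mm$. The cutoff device does not give you the property you need: replacing a candidate $h$ by $h\,\psi_m$ with $\psi_m=\max\bigl(0,1-m\,\sfd(\cdot,B)\bigr)$ costs, by the very Leibniz rule you invoke, an extra $\int_\X h\,\lip_a(\psi_m)\,\d\mm$, which is only bounded by $m\int_{\{0<\sfd(\cdot,B)\le 1/m\}}h\,\d\mm+m\int_{\partial B}h\,\d\mm$. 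With your quantifier order (first fix the candidate $h$, then take $m=m(n)$ large) the first term is a Minkowski-content-type quantity that does not tend to $0$ as $m\to\infty$ and is not controlled by ${\rm Per}(B)$, and the second is of order $m\,\mm(\partial B)$, which blows up whenever $\partial B$ has positive measure --- a situation squarely within the scope of this paper (for the sets of Example \ref{ex:positiveboundary}, whose boundaries carry atoms, one has $\lip_a(\psi_m)=m$ at the atoms, so the cost is at least $m\,2^{-n}h(x_n)\to\infty$). So the auxiliary claim that one can keep $\int_\X\lip_a(h_n)\,\d\mm\to{\rm Per}(B)$ and $h_n\to\chi_B$ in $L^1_{\loc}(\mm)$ while forcing $\Lip(k_n)\int_{\X\setminus B}h_n\,\d\mm<1/n$ is not established, and the proposed mechanism for it fails.

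The difficulty is also unnecessary, and this is exactly where the paper's proof differs from yours: do not modify the sequences, just order the limits correctly. Fix $n$; since $k_n$ is globally Lipschitz, $\lip_a(k_n)\le\Lip(k_n)$ is a fixed bounded function, and $h_j\to\chi_B$ in $L^1_{\loc}(\mm)$ gives $\int_{\X\setminus B}h_j\,\lip_a(k_n)\,\d\mm\to0$ as $j\to\infty$ for each fixed $n$. Passing to a diagonal subsequence in the index of the $B$-approximants (leaving the $k_n$ untouched) therefore makes the cross term vanish; this is harmless because $\int_\X\lip_a(h_j)\,\d\mm\to{\rm Per}(B)$ along the full sequence, while $\liminf_n\int_B\lip_a(k_n)\,\d\mm={\rm Per}_B(A)$ is unaffected, and then your computation yields \eqref{eq:perimetersum} exactly as in the paper.
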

\begin{proof}
Let \((f_i)_i\subseteq {\rm LIP}(\X)\) be such that 
\begin{equation}\label{eq:aprox_B}
f_i\to \chi_B\,\text{ in }L^1_{\rm loc}(\mm)\quad \text{ and }\quad \lim_{i\to\infty}\int_\X{\rm lip}_a(f_i)\,{\rm d}\mm ={\rm Per}(B).
\end{equation}
Since $B$ is closed, by Corollary \ref{cor:closed} there exists a sequence \((g_i)_i\subseteq {\LIP}(\X)\)
%Fix \(\varepsilon>0\). Choose an open set 
%\(U_\varepsilon\supseteq B\) and a sequence \((\tilde g_i)_i\subseteq {\rm Lip}_{\rm loc}(U_\varepsilon)\)
such that 
%\[
%\tilde g_i\to \chi_A\,\text{ in }L^1_{\rm loc}(B)
%\quad \text{ and }\quad \liminf_{i\to\infty}\int_{B}{\rm lip} (\tilde g_i)\,{\rm d}\mm \leq {\rm Per}_B(A).
%\]
%Since \(B\) is compact, we have that \(\tilde g_i\) is not only locally Lipschitz, but Lipschtiz on \(B\). Hence, by restricting each \(\tilde g_i\) to \(B\) and extending them as Lipschitz functions to the whole space \(\X\) we obtain a sequence \((g_i)_i \subseteq {\rm Lip}(\X) \) such that
\begin{equation}\label{eq:aprox_A}
g_i\to \chi_A\,\text{ in }L^1_{\rm loc}(\mm\restr{B})
\quad \text{ and }\quad \liminf_{i\to\infty}\int_{B}{\rm lip}_a(g_i)\,{\rm d}\mm = {\rm Per}_B(A).
\end{equation}
For a fixed \(i\in \mathbb N\) we then have that
\[
\lim_{j\to +\infty} \int_{\X \setminus B} f_j\,{\rm lip}_a(g_i)\, {\rm d}\mm =0.
\]
Therefore, 
up to taking a (relabeled) 
subsequence of  \((f_i)_i\),  we may assume that
\begin{equation}\label{eq:workingtogether}
\lim_{i\to +\infty} \int_{\X \setminus B} f_i\,{\rm lip}_a(g_i)\, {\rm d}\mm =0.
\end{equation}
Now notice that 
$f_i\,g_i \to \chi_A$ and $f_i(1-g_i) \to \chi_{B \setminus A}$ in $L_{\text{loc}}^1(\mm)$, and that
\begin{align*}
 &\int_\X \text{lip}_a(f_i\,g_i)\,{\rm d}\mm 
 + \int_\X \text{lip}_a(f_i\,(1-g_i))\,{\rm d}\mm\\
  \le & \int_\X (f_i\,\text{lip}_a(g_i) + g_i\,\text{lip}(f_i))\,{\rm d}\mm + \int_\X (f_i\,\text{lip}_a(1-g_i) + (1-g_i)\,\text{lip}_a(f_i))\,{\rm d}\mm\\
  = & \,2 \int_\X f_i\, \text{lip}_a(g_i)\,{\rm d}\mm 
  + \int_\X \text{lip}_a(f_i)\,{\rm d}\mm.
\end{align*}
Taking into account \eqref{eq:workingtogether}, this gives
\begin{align*}
\text{Per}(A) + \text{Per}(B\setminus A) & \le \liminf_{i\to \infty} 
2 \int_\X f_i \text{lip}_a(g_i)\,{\rm d}\mm + \int_\X \text{lip}_a(f_i)\,{\rm d}\mm\\
&= \liminf_{i\to \infty} 
2 \int_B f_i \text{lip}_a(g_i)\,{\rm d}\mm + \int_\X \text{lip}_a(f_i)\,{\rm d}\mm
\leq  2 \text{Per}_B(A) + \text{Per}(B),
\end{align*}
where the last inequality follows from \eqref{eq:aprox_A} and \eqref{eq:aprox_B}.
%By letting \(\varepsilon\to 0\), we prove \eqref{eq:perimetersum}.
\end{proof}

Notice that Lemma \ref{lma:perimetersum} does not hold in general if we replace the closed set \(B\) with a general Borel set. This is seen from the next simple example.
\begin{example}\label{ex:simple}
Let us consider \((\R,\sfd_{\rm Eucl},\mathcal L^1)\) as our metric measure space. Let 
\(B = (0,1)\cup (1,2)\) and \(A=(0,1)\). Then we have that
\[
4={\rm Per}(A)+{\rm Per}(B\setminus A)>{\rm Per}(B)+ 2{\rm Per}_B(A)=2.
\]
%{\color{red} (Still to be written. This was the simple example of \(B = (0,1)\cup (1,2)\).)}
\end{example}

\begin{theorem}\label{thm:extensionsub}
 Let $(\X,\sfd,\mm)$ be a metric measure space. Let $\Omega \subset \X$ be a bounded open set. Then for every $\varepsilon >0$ there exists a closed set $G \subset \Omega$ such that $\mm(\Omega\setminus G) < \varepsilon$ and so that the zero extension gives a bounded operator from $BV(G)$ to $BV(\X)$.
\end{theorem}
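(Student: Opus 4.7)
The plan is to obtain $G$ as a minimizer of the functional $F_\lambda(A):={\rm Per}(A)+\lambda\mm(\Omega\setminus A)$ over closed subsets of $\Omega$, for a sufficiently large parameter $\lambda$, and then to deduce the $BV$-extension property from its minimality via Proposition \ref{prop:BV} and Lemma \ref{lma:perimetersum}.

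First I would calibrate $\lambda$ using an explicit competitor. The $1$-Lipschitz function $\phi(x):=\sfd(x,\X\setminus\Omega)$ together with the coarea inequality furnishes $\delta_0>0$ such that the closed set $G_0:=\{\phi\ge\delta_0\}\subset\Omega$ satisfies ${\rm Per}(G_0)<\infty$ and $\mm(\Omega\setminus G_0)<\varepsilon/2$; then I fix any $\lambda>2{\rm Per}(G_0)/\varepsilon$. To produce a minimizer, I would relax the problem to the convex class of $f\in BV(\X)$ with $0\le f\le 1$ and $\supp(f)\subset G_0$, on which $F_\lambda(f):=|Df|_\X(\X)+\lambda\int_\Omega(1-f)\,\d\mm$ is convex and $L^1$-lower semicontinuous. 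A minimizing sequence is equi-integrable with common bounded support, so Dunford--Pettis produces a weak $L^1$-limit; Mazur's lemma together with the strong $L^1$-lsc yields a minimizer $f_\infty$. The coarea identity
\[
F_\lambda(f_\infty)=\int_0^1\bigl({\rm Per}(\{f_\infty>s\})+\lambda\mm(\Omega\setminus\{f_\infty>s\})\bigr)\,\d s
\]
then forces $\{f_\infty>s\}$ to be a set-wise minimizer of $F_\lambda$ for a.e.\ $s\in(0,1)$; choosing a closed representative of such a level set yields the required $G\subset G_0\subset\Omega$. The choice of $\lambda$ then gives
\[
\lambda\mm(\Omega\setminus G)\le F_\lambda(G)\le F_\lambda(G_0)\le {\rm Per}(G_0)+\lambda\varepsilon/2,
\]
whence $\mm(\Omega\setminus G)<\varepsilon$.

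With $G$ at hand, the extension property is deduced from Proposition \ref{prop:BV}: it suffices to extend open sets $E\subset G$ of finite perimeter on $G$. Take the zero extension $\widetilde E:=E$. Since $G$ is closed and $E$ is relatively open in $G$, the complement $G\setminus E$ is closed in $\X$ and hence a valid competitor for $G$. Minimality together with $\mm(\Omega\setminus(G\setminus E))=\mm(\Omega\setminus G)+\mm(E)$ gives ${\rm Per}(G)\le {\rm Per}(G\setminus E)+\lambda\mm(E)$; feeding this into Lemma \ref{lma:perimetersum} yields
\[
{\rm Per}(E)\le {\rm Per}(G)+2{\rm Per}_G(E)-{\rm Per}(G\setminus E)\le 2{\rm Per}_G(E)+\lambda\mm(E),
\]
so that $\mm(\widetilde E)+{\rm Per}(\widetilde E)\le(\lambda+2)\bigl(\mm(E)+{\rm Per}_G(E)\bigr)$, which is the extension property with the full norm.

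The main obstacle in this plan is the passage from a Borel minimizer $\{f_\infty>s\}$ to a \emph{closed} representative with the same value of $F_\lambda$. In the absence of a doubling property or good differentiation theorems, it is not automatic that an $\mm$-equivalence class of sets of finite perimeter contains a closed representative whose perimeter coincides with that of the class. I expect this to be the step where the proof requires the most care: likely via a direct construction that exploits the specific form of $F_\lambda$ (e.g., taking a suitable topological closure and checking by Lemma \ref{lma:perimetersum} that the modification does not increase the functional) rather than an abstract compactness argument.
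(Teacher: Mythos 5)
Your overall strategy is the paper's: calibrate $\lambda$ with the truncated distance function and the coarea inequality (this matches the paper's Step 1), and deduce the extension bound from minimality against the closed competitors $G\setminus E$ together with Lemma \ref{lma:perimetersum} and Proposition \ref{prop:BV} (this matches the paper's Step 3, giving the same constant $\max\{2,\lambda+1\}$ up to cosmetics). The genuine gap is exactly the step you flag at the end: passing from the Borel level-set minimizer $\{f_\infty>s\}$ to a \emph{closed} representative in $\Omega$ with the same value of $F_\lambda$. This is not merely a step ``requiring care'': the paper explicitly does not know whether minimizers of $M_\lambda$ (or $\widetilde M_\lambda$) admit closed representatives in general metric measure spaces and records this as an open problem (Question \ref{q:closedrep}); Example \ref{ex:positiveboundary} shows open representatives can fail even in $\R^2$ with a weighted measure. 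Moreover, a Borel set of finite perimeter need not have \emph{any} closed representative: if $E\subset[0,1]$ is the (open, dense) complement of a fat Cantor set, every closed $F$ with $\mm(E\setminus F)=0$ contains $[0,1]$, so $\mm(F\setminus E)>0$ (this is the same phenomenon as in Remark \ref{rmk:representatives}). Your suggested repair by topological closure does not work either: a representative of the level set can be dense in $\Omega$, so its closure may have strictly larger measure and uncontrolled perimeter, and Lemma \ref{lma:perimetersum} cannot be used to compare a set with its closure since the lemma requires the ambient set to be closed.

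The paper circumvents the problem by never producing a global (Borel) minimizer at all. It works from the start inside the class $\mathcal C_\Omega$ of closed subsets of $\Omega$, equips $\{A\in\mathcal C_\Omega:\ M_\lambda(A)<\infty\}$ with the relation $A\prec_\lambda B$ iff $\mm(A\setminus B)=0$ and $M_\lambda(A)\le M_\lambda(B)$, and applies Zorn's lemma: a chain can be reduced to a countable nested one, whose intersection is again closed, and lower semicontinuity of the perimeter under $L^1$ convergence of the characteristic functions shows this intersection is a lower bound. A $\prec_\lambda$-minimal element $G$ below a near-minimizing closed competitor is then closed by construction, satisfies $\mm(\Omega\setminus G)\le\tfrac1\lambda M_\lambda(G)<\varepsilon$ by the Step 1 calibration, and its minimality against the closed sets $G\setminus E$ (for $E\subset G$ relatively open of finite perimeter in $G$) is precisely the inequality your extension argument needs; note that Proposition \ref{prop:BV} is tailored to require only \emph{open} test sets for this reason. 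So to complete your proof you should replace the Dunford--Pettis/Mazur/coarea construction of a minimizer (which is fine as far as it goes, but lands you on a Borel set you cannot close up) by this order-theoretic selection within closed sets, or else supply an argument answering Question \ref{q:closedrep} — which is more than the theorem requires.
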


\begin{proof}
 Let us denote  \(\mathcal C_\Omega = \{A \subset \Omega \,:\, A\text{ closed}\}\). 
 We consider the following functionals. For $\lambda > 0$ define \(M_\lambda \colon \mathcal C_\Omega\to [0,+\infty]\) as
 \[
     M_\lambda(A)\coloneqq \text{Per}(A) + \lambda \mm(\Omega \setminus A).
 \]
 We will show that for $\lambda$ large enough, a minimal element in a partial order given by $M_\lambda$ will give the desired set $G$. We divide the proof into several steps.
\smallskip

 \noindent {\color{blue}\textsc{Step 1}}: For every $\lambda>0$, we have $\inf_{A \in \mathcal C_\Omega}M_\lambda(A) < +\infty$. Moreover, given \(\varepsilon>0\)
 there exists \(\lambda>0\) such that  
 for any sequence $(A_i^\lambda)_i \subset \mathcal C_\Omega$ satisfying $M_\lambda(A_i^\lambda) \to \inf_{A \in 
 \mathcal C_\Omega}M_\lambda(A)$ as $i \to +\infty$ we have that
\begin{equation}\label{eq:measure_condition}
\lim_{i\to +\infty}
\frac{1}{\lambda}
M_\lambda(A_i^\lambda)
<\varepsilon.
%\limsup_{\lambda \to \infty}\,
%\limsup_{i\to +\infty}\mm(\Omega \setminus A_i^\lambda) <\varepsilon.
\end{equation}
%{\color{red}(Write this for the minimizer as step 2?)}
\begin{proof}[Proof of Step 1]
For every \(r>0\) we set 
\[B(\partial \Omega, r)\coloneqq 
\{x\in X:\,{\rm dist}(\partial \Omega, x)<r\}\quad 
\text{ and }\quad m_r\coloneqq \mm(\Omega\cap B(\partial \Omega, r)).\]
Consider the truncated distance function 
\({\rm dist}_r(\cdot,\partial \Omega)
\coloneqq {\rm dist}(\cdot, \partial \Omega)\wedge r\).
By Coarea formula we have that 
\[
|D{\rm dist}_r(\cdot, \partial \Omega)|(\Omega)=\int_0^r{\rm Per}(\{{\rm dist}_r(\cdot, \partial \Omega)>s\};\Omega)\,{\rm d}s=\int_0^r{\rm Per}(\Omega\setminus B(\partial \Omega, s))\,{\rm d}s.
\]
Moreover, \(|D{\rm dist}_r(\cdot, \partial \Omega)|(\Omega)=|D{\rm dist}_r(\cdot, \partial \Omega)|(\Omega\cap B(\partial \Omega, r))\leq \mm(\Omega\cap B(\partial \Omega, r))=m_r\). Together with the above, this 
gives the existence of $s \in [0,r]$ such that 
\[{\rm Per}(\Omega \setminus B(\partial\Omega,s)) \le \frac{m_r}{r},\]
proving the first part of the claim.
%In particular, the first part of the claim is proven.
Let now $\varepsilon > 0$. Take $r>0$ so small that $m_r = \mm(B(\partial\Omega,r)\cap \Omega)<\frac{\varepsilon}{2}$. 
Note that for any $\lambda > 0$, we have
\(
\lim_{i\to \infty} M_\lambda(A_i^\lambda) \le \frac{m_r}{r} + \lambda m_r
\)
and so, by taking $\lambda > \frac1r$, we get
\[
 %\limsup_{i\to +\infty}\mm(\Omega \setminus A_i^\lambda) \le 
 \lim_{i\to \infty} \frac{1}{\lambda}\,M_\lambda(A_i^\lambda) \le \frac{m_r}{\lambda r} + m_r < 2m_r < \varepsilon.
\]
This proves the claim of \textsc{Step 1}. 
\end{proof}

Next, we shall consider the following (non-empty, due to \textsc{Step 1}) subset of \(\mathcal C_\Omega\): 
\[\mathcal C_{\Omega,\lambda}\coloneqq  \{A \in \mathcal C_\Omega\,:\, M_\lambda(A) < +\infty\}.\]
Consider now a partial order \(A \prec_\lambda B\) on \(\mathcal C_{\Omega,\lambda}\) defined as
\[
A \prec_\lambda B\quad \text{ if and only if }\quad  \mm(A\setminus B) = 0\text{ and }M_\lambda(A) \le M_\lambda(B).
\]
\smallskip

\noindent
{\color{blue}\textsc{Step 2}}: For every $\lambda>0$
and \(C \in \mathcal C_{\Omega,\lambda}\),
the set \(\{A\in \mathcal C_{\Omega,\lambda}:\, A\prec_\lambda C\}\)
%the set \(\mathcal C_{\Omega,\lambda}\)
has a minimal element with respect to the partial order \(\prec_\lambda\).
%there exists a minimal element $G_{\lambda,C} \prec_\lambda C$ for the partial order $\prec_\lambda$.
\begin{proof}[Proof of Step 2]
% Let $\lambda$ and $C \in \mathcal C_{\Omega,\lambda}$ be given.
 By Zorn's Lemma, it suffices to prove that any chain 
 \(
 (A_i^\lambda)_{i\in I}\subset 
\{A\in \mathcal C_{\Omega, \lambda}:\, A\prec_\lambda C\}
 \)
 %$(A_i^\lambda)_{i \in I} \subset \mathcal C_{\Omega,\lambda}$ 
 contains a lower bound. 
 By selecting inductively elements in the chain so that $\mm(A_i^\lambda \setminus A_j^\lambda)>0$, we may assume that $I = \mathbb N$. Moreover, we may assume that $A_{i+1}^\lambda \subset A_i^\lambda$ for all $i \in \mathbb N$.
 We claim that
 \[
 A^\lambda = \bigcap_{i =1}^\infty A_i^\lambda
 \]
 gives the lower bound. Trivially, $A^\lambda \subset A_i^\lambda$ for all $i\in \mathbb N$, so it is enough to prove that 
 \(
  M_\lambda(A^\lambda) \le M_\lambda(A_i^\lambda)
 \)
 for all $i \in \mathbb N$.
 To verify the latter, notice that by the continuity of measure, we have that $\mm(A^\lambda) = \lim_{i\to +\infty}\mm(A_i^\lambda)$. 
 Consequently, $\chi_{A_i^\lambda} \to \chi_{A^\lambda}$ in $L^1(\X)$ and so by the lower semicontinuity of the perimeter, we have also
 \(
  \text{Per}(A^\lambda) \le \liminf_{i \to +\infty}\text{Per}(A_i^\lambda),
 \)
 proving the claim.
% Consider a sequence $A_i^\lambda \subset \Omega$ so that $M_\lambda(A_i^\lambda) \to \inf_{A \subset \Omega}M_\lambda(A)$ as $i \to \infty$.
%Since $\mm(\Omega)<\infty$, up to a subsequence, we have weak convergence $\chi_{A_i^\lambda} \rightharpoonup f^\lambda \in L^1(\Omega)\cap L^\infty(\Omega)$ in $L^1(\Omega)$. By Mazur's lemma, the exists a sequence of convex combinations $f_i^\lambda = \sum_{j=i}^{k_i^\lambda}\eta_{i,j}^\lambda \chi_{A_j^\lambda}$ converging strongly to $f^\lambda$. {\color{red}(Do we need Mazur?)} By the lower semicontinuity of total variation, \[
%|Df^\lambda|(\Omega) \le \liminf_{i \to \infty}|Df_i^\lambda|(\Omega)       \]
%and (already) by weak convergence,
%\[
%\int_\Omega(1-f^\lambda(x))\,d\mm(x) = \lim_{i\to \infty}\int_\Omega(1-f_i^\lambda(x))\,d\mm(x).
%\]
%Hence, by the coarea formula, for $\mathcal L^1$-a.e. $t \in [0,1]$, we get
%\[
% M_\lambda(\{f^\lambda > t\}) \le \liminf_{i\to \infty} \left(|Df_i^\lambda|(\Omega) + \lambda \int_\Omega(1-f_i^\lambda(x))\,d\mm(x)\right) = \lim_{i\to\infty} M_\lambda(A_i^\lambda).
%\]
%Thus, we may take $G_\lambda = \{f^\lambda > t\}$ for a $t\in [0,1]$ satisfying the above.
\end{proof}

%By \textsc{Step 1} it is now enough to
We now show that for any $\lambda >0$ and a minimal element $G_\lambda \in \mathcal C_{\Omega,\lambda}$ with respect to $\prec_\lambda$ we have that the zero extension from $G_\lambda$ gives a bounded operator.
Given any Borel set \(B\subset \X\), in what follows we will denote  by \(E_B\)  the zero-extension operator from 
\( BV(B)\) to \(BV(\X)\).
\smallskip

\noindent
{\color{blue}\textsc{Step 3}}: 
Fix any \(\lambda>0\) and \(C\in \mathcal C_{\Omega, \lambda}\).
Let \(G_{\lambda,C}\) be a minimal element in \(\{A\in \mathcal C_{\Omega,\lambda}:\, A\prec_{\lambda} C\}\) 
with respect to the partial order 
\(\prec_\lambda\). Then we have that
$\|E_{G_\lambda}\| < +\infty$.
\begin{proof}[Proof of Step 3]
 By Proposition \ref{prop:BV}, we only need to check that the zero extension is bounded for characteristic functions of open sets of finite perimeter in $G_{\lambda,C}$.
So, let $A \subset G_{\lambda,C}$ be relatively open with $\text{Per}_{G_{\lambda,C}}(A)<+\infty$.
 Then by the minimality of $G_{\lambda,C}$ and the fact that \(\mm((G_{\lambda,C}\setminus A)\setminus G_{\lambda,C})=0\) 
 we have that
 \begin{equation}\label{eq:minimalitygives}
\text{Per}(G_{\lambda,C}) + \lambda \mm(\Omega \setminus G_{\lambda,C}) \le \text{Per}(G_{\lambda,C} \setminus A) + \lambda \mm(\Omega \setminus (G_{\lambda,C} \setminus A)),
 \end{equation}
 and by Lemma \ref{lma:perimetersum}
 \begin{equation}\label{eq:lemmagives}
  \text{Per}(G_{\lambda,C} \setminus A) + \text{Per}(A) \le
 \text{Per}(G_{\lambda,C}) + 2\text{Per}_{G_{\lambda,C}}(A).
 \end{equation}
 Therefore, combining \eqref{eq:minimalitygives} and \eqref{eq:lemmagives} we get
 \[
  \text{Per}(A) \le 2\text{Per}_{G_{\lambda,C}}(A)
 + \lambda \mm(A),
 \]
 and so $\|E_{G_{\lambda,C}}\| \le \max\{2,\lambda+1\}$ for characteristic functions.
\end{proof}
\smallskip

We are now ready to combine the results obtained in the three steps above and get the claim of the theorem. 

\noindent {\color{blue}\textsc{Step 4.}}
Fix \(\varepsilon>0\). There exists a closed set \(G\subset \Omega\) such that 
\[\mm(\Omega\setminus G)<\varepsilon\quad \text{ and }\quad \|E_G\|<+\infty.\]
\begin{proof}[Proof of Step 4.]
Let \(\lambda\) (depending on \(\varepsilon\)) be such that the claim of \textsc{Step 1} holds and fix any minimizing sequence \((A^\lambda_i)_{i\in \mathbb N}\). 
Then, for \(i\in \mathbb N\) large enough we have that \(\frac{1}{\lambda} M_\lambda(A_i^\lambda)<\varepsilon\) and thus \(A_i^\lambda\in \mathcal C_{\lambda,\Omega}\).
Let \(G_{\lambda,A_i^\lambda}\) be a minimal element 
in the set \(\{A\in \mathcal C_{\Omega, \lambda}:\, A\prec_\lambda A_i^\lambda\}\)
with respect to the partial order \(\prec_\lambda\), whose existence has been proved in \textsc{Step 2}. By \textsc{Step 3} we know that \(G_{\lambda,A_i^\lambda}\) is a \(BV\)-extension set, thus it only remains to check that \(\mm(\Omega\setminus G_{\lambda,A_i^\lambda})<\varepsilon\).
To verify this, notice that, by the minimality property of \(G_{\lambda,A^\lambda_i}\), it holds that
\[
\mm(\Omega\setminus G_{\lambda,A^\lambda_i})\leq \frac{1}{\lambda} M_\lambda(G_{\lambda,A^\lambda_i})\leq \frac{1}{\lambda} M_\lambda(A_i^\lambda)<\varepsilon.
\]
This proves the statement of \textsc{Step 4} (and of the theorem itself) for \(G=G_{\lambda,A^\lambda_i}\).
\end{proof}
\end{proof}

By approximating a measurable set from outside by an open set, Theorem \ref{thm:extensionsub} gives the following corollary.

\begin{corollary}\label{cor:measurable}
 Let $(\X,\sfd,\mm)$ be a  metric measure space and let $F \subset \X$ be a bounded Borel set. Then for every $\varepsilon >0$ there exists a closed set $G \subset \X$ such that $\mm(F\Delta G) < \varepsilon$ and so that the zero extension gives a bounded operator from $BV(G)$ to $BV(\X)$.
\end{corollary}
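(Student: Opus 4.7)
The plan is to deduce the corollary from Theorem \ref{thm:extensionsub} by sandwiching the Borel set $F$ between itself and a bounded open neighborhood, and then approximating that neighborhood from inside by a closed BV-extension set.

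First, I would fix $\varepsilon>0$ and use outer regularity of $\mm$ to produce a bounded open set $\Omega\supset F$ with $\mm(\Omega\setminus F)<\varepsilon/2$. Since $F$ is bounded, pick some ball $B_R(x_0)\supset F$; the restriction $\mm\restr{B_{R+1}(x_0)}$ is a finite Borel measure on the Polish space $\X$, hence Radon and in particular outer regular. Thus there exists an open set $U\subset\X$ with $F\subset U$ and $\mm(U\setminus F)<\varepsilon/2$. Replacing $U$ by $\Omega:=U\cap B_{R+1}(x_0)$, which remains open and contains $F$, we obtain a bounded open neighborhood of $F$ with $\mm(\Omega\setminus F)<\varepsilon/2$.

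Next, I would apply Theorem \ref{thm:extensionsub} to $\Omega$ with parameter $\varepsilon/2$, obtaining a closed set $G\subset\Omega$ with $\mm(\Omega\setminus G)<\varepsilon/2$ such that the zero extension from $BV(G)$ to $BV(\X)$ is bounded. It remains to estimate $\mm(F\Delta G)$. Since $G\subset\Omega$ and $F\subset\Omega$, we have $F\setminus G\subset\Omega\setminus G$ and $G\setminus F\subset\Omega\setminus F$, so
\[
\mm(F\Delta G)=\mm(F\setminus G)+\mm(G\setminus F)\le \mm(\Omega\setminus G)+\mm(\Omega\setminus F)<\varepsilon,
\]
which finishes the proof.

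There is no real obstacle here; the only subtle point is the appeal to outer regularity, which is justified by the Polish assumption on $(\X,\sfd)$ together with the finiteness of $\mm$ on bounded sets. Once the open neighborhood $\Omega$ is available, the statement of Theorem \ref{thm:extensionsub} does the rest, and $G$ inherits the BV-extension property directly from that theorem without any further modification.
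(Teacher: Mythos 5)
Your proof is correct and follows exactly the route the paper intends: approximate $F$ from outside by a bounded open set $\Omega$ (using regularity of the finite measure $\mm\restr{B_{R+1}(x_0)}$), apply Theorem \ref{thm:extensionsub} to $\Omega$ with parameter $\varepsilon/2$, and conclude via $\mm(F\Delta G)\le \mm(\Omega\setminus G)+\mm(\Omega\setminus F)<\varepsilon$. The paper leaves this as a one-line remark, and your write-up is a faithful, complete version of that same argument.
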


\begin{remark}\label{rmk:representatives}
A stronger version of Corollary \ref{cor:measurable} where we require in addition that $G \subset F$, does not hold. A counter example is given by taking $F$ to be a fat Cantor set in $\mathbb R$ equipped with the Lebesgue measure.
\end{remark}

We end this section with an example where the set $G$ does not have an open representative.

\begin{example}\label{ex:positiveboundary}
Let $X = \R^2$ with the Euclidean distance. We define $\Omega = Q \cup \bigcup_{n = 1}^{\infty} T_n$, where $Q = (0,1)\times (-1,0)$ and $T_n$ are defined as follows. We start by defining a triangle with unit length base:
\[
T = \big\{(x,y) \in \mathbb R^2\,:\, y \ge 0,\, y < x < 1 - y \big\}.
\]
Notice that $T$ contains the base, but not the other two sides of the triangle.
We then define
\[
T_n = \big( 2^{-2n+1}T + (2^{-2n+1},0)
\big)\quad \text{ and }\quad 
S = \bigcup_{n = 1}^{\infty} T_n.
\]
Let 
\[
w(x,y) = \begin{cases} 
      \min\big\{1, {\rm dist}\big((x,y), \R \times \{-1, 0\}\big)\big\}, & (x,y)\ {\rm with}\ y\in [-1,0] \\  
      1, & {\rm otherwise}.
\end{cases}
\]
Furthermore, define 
\[
\mm = w\mathcal{L}^2 + \sum_{n = 1}^{\infty} 2^{-n}\delta_{x_n},
\]
where $x_n = (2^{-2n+1} + 2^{-2n},0)$ is the center point of the base of the triangle $T_n$.

{\color{blue} \sc Step 1:} Let us show that we can split the functional $M_{\lambda}$ with respect to the cube $\overline{Q}$ and the triangles $T_n$.
First notice that for all  $A \subset \Omega$ we have
\[
\mm(\Omega \setminus A) = \mm (Q \setminus A) + \sum_{n=1}^\infty\mm({T}_n \setminus A).
\]
Towards showing that the perimeter part of the functional $M_\lambda$ also splits, 
we next show that for a finite perimeter set $A \subset \Omega$ it holds Per$(A\cap \R_{+}^{2}; \R_{+}^{2}) = {\rm Per}(A; \R_{+}^{2})$, where $\R_{+}^{2}=\R \times [0,\infty)$ is the closed upper half plane.
We do this by showing the chain of inequalities
\begin{equation}\label{Perimeterestimate}
\begin{split}
{\rm Per}(A) & = {\rm Per}(A;\R_{+}^{2}) + {\rm Per}(A;\R^2 \setminus \R_{+}^{2}) \\
& \ge {\rm Per}(A\cap \R_{+}^{2}; \R_{+}^{2}) + {\rm Per}(A;\R^2 \setminus \R_{+}^{2}) \ge {\rm Per}(A).
\end{split}
\end{equation}
The equality in the chain \eqref{Perimeterestimate} follows by subadditivity.
We first show the inequality Per$(A \cap \R_{+}^{2}; \R_{+}^{2}) \le {\rm Per}(A; \R_{+}^{2})$.
To this end we define 
\[
\phi_i(x) = {\rm min}\big\{0, 1 - 2i\,{\rm dist}(x,\{0\}\times \R)\big\}
\]
and call 
$U_i$ the $\frac{1}{i}$-neighborhood of $\R_{+}^{2}$.
This way we obtain $\phi_i \in  {\LIP}(\R^2)$ with values in $[0,1]$ such that spt$(\phi_i) \subset U_i$, $\phi_i \rightharpoonup \chi_{\R_{+}^{2}}$ 
in \(L^1(\mm)\)
and 
\[
\int_{\R^2} {\rm lip}_a (\phi_i)\,\d \mm \to 0.
\]
%spt$({\rm lip}_a (\phi_i)) \subset \R^2 \setminus \R_{+}^{2}$.
Further let  $f_i \in {\LIP}_{{\rm loc}}(U_i)$
%where $U_i$ is again the $\frac{1}{i}$-neighborhood of $\R_{+}^{2}$,  
be such that $f_i \rightharpoonup \chi_{A}$  and $\int_{\R^2} {\rm lip}_a (f_i)\,\d\mm \to {\rm Per}(A;\R_{+}^{2})$. We may assume that $f_i$ have values in $[0,1]$. Now setting $g_i = f_i \phi_i$ we have $g_i \rightharpoonup \chi_{A\cap \R_{+}^{2}}$ and $g_i$ is an admissible sequence of Lipschitz functions for Per$(A\cap \R_{+}^{2};\R_{+}^{2})$.
By the Leibniz rule we now obtain

\begin{equation}\label{leibnizestimate}
\begin{split}
\int_{\R^2} {\rm lip}_a (g_i)\,\d\mm &\le \int_{\R^2} \lvert \phi_i \rvert{\rm lip}_a (f_i)\,\d\mm + \int_{\R^2} \lvert f_i \rvert {\rm lip}_a (\phi_i)\,\d\mm \\
&\le \int_{\R^2} {\rm lip}_a (f_i)\,\d\mm + \int_{\R^2} {\rm lip}_a (\phi_i)\,\d\mm \to {\rm Per}(A;\R_{+}^{2}).
\end{split}
\end{equation}
Thus we have Per$(A \cap \R_{+}^{2}; \R_{+}^{2}) \le {\rm Per}(A; \R_{+}^{2})$. %Notice that since $\R^2 \setminus \R_{+}^{2}$ is an open set, we have
%\[
%{\rm Per}(A,\R^2 \setminus \R_{+}^{2}) = {\rm Per}(A \setminus \R_{+}^{2}, \R^2 \setminus \R_{+}^{2})
%\]
%and so the first inequality in \eqref{Perimeterestimate} is proven.

Next we show the second inequality ${\rm Per}(A \cap \R_{+}^{2}; \R_{+}^{2}) + {\rm Per}(A;\R^2 \setminus \R_{+}^{2}) \ge {\rm Per}(A)$.
To this end we let $\phi_i$ be as before. Further let $f_i \rightharpoonup \chi_{A}$ be a sequence of ${\LIP}_{{\rm loc}}(\R^2 \setminus \R_{+}^2)$ functions, such that $\int {\rm lip}_a (f_i)\,\d\mm \to {\rm Per}(A; \R^2 \setminus \R_{+}^{2})$, and let $g_i \rightharpoonup \chi_{A \cap \R_{+}^{2}}$ be a sequence of ${\LIP}_{{\rm loc}} (U_{i})$ functions, such that $\int {\rm lip}_a (g_i)\,\d\mm \to {\rm Per}(A\cap \R_{+}^{2};\R_{+}^{2})$.
%, where $U_{i}$ is the $\frac{1}{i}$-neighborhood of $\R_{+}^{2}$.
We may again assume that $f_i$ and $g_i$ have values in $[0,1]$.
%Notice that by the definitions of $\phi_i$ and $U_i$ we have spt$(\phi_i) \subset U_i$.
Therefore, we can set $h_i = \phi_i g_i + (1-\phi_i)f_i$, for which it holds $h_i \rightharpoonup \chi_{A}$.
Now again by a similar approximation as before using the Leibniz rule we obtain
\begin{equation}
\begin{split}
\int_{\R^2} {\rm lip}_a (h_i)\,\d\mm & \le \int\lvert \phi_i \rvert {\rm lip}_a (g_i)\,\d\mm + \int \lvert g_i \rvert {\rm lip}_a (\phi_i)\,\d\mm  + \int \lvert 1 - \phi_i \rvert {\rm lip}_a (f_i)\,\d\mm + \int \lvert f_i \rvert {\rm lip}_a (1-\phi_i)\,\d\mm \\
& \le \int {\rm lip}_a (g_i)\,\d\mm + 2\int {\rm lip}_a (\phi_i)\,\d\mm + \int {\rm lip}_a (f_i)\,\d\mm\\
&
\to {\rm Per}(A \cap \R_{+}^{2}; \R_{+}^{2}) + {\rm Per}(A;\R^2 \setminus \R_{+}^{2}),
\end{split}
\end{equation}
from which the claimed inequality follows.
Now Per$(A\cap \R_{+}^{2}; \R_{+}^{2}) = {\rm Per}(A; \R_{+}^{2})$.
Notice that since $\R^2 \setminus \R_{+}^{2}$ is an open set, we have
\[
{\rm Per}(A;\R^2 \setminus \R_{+}^{2}) = {\rm Per}(A \setminus \R_{+}^{2}; \R^2 \setminus \R_{+}^{2}).
\]
%Thus, 
Let us recall that the perimeter measure enjoys the following locality property: given an open set \(U\subset \X\) and sets of finite perimeter \(E,F\subset \X\) such that \(\mm(U\cap (E\Delta F))=0\), it holds that 
\begin{equation}\label{eq:locality}
{\rm Per}(E;U)={\rm Per}(F;U). 
\end{equation}
Taking 
%also
into account that $\overline{T}_n$ are pairwise disjoint compact sets together with \eqref{eq:locality}, one can easily verify that  
\[
 {\rm Per}(A\cap \R_{+}^{2};\R_{+}^{2})
=
\sum_{n=1}^\infty {\rm Per}(A\cap \overline{T}_n;\overline{T}_n).
\]
Consequently, we
get
\begin{equation}
\begin{split}
{\rm Per}(A) & = {\rm Per}(A;\R^2 \setminus \R_{+}^{2})+  {\rm Per}(A;\R_{+}^{2}) \\
&= {\rm Per}(A \setminus \R_{+}^{2};\R^2 \setminus \R_{+}^{2}) + {\rm Per}(A\cap \R_{+}^{2};\R_{+}^{2})\\
&= {\rm Per}(A \setminus \R_{+}^{2};\R^2 \setminus \R_{+}^{2}) + \sum_{n=1}^\infty {\rm Per}(A\cap \overline{T}_n;\overline{T}_n).
\end{split}
\end{equation}

%and this implies for the minimizer {\color{red} Additional argument here?} $G_{\lambda}$ that Per$(G_{\lambda}) = {\rm Per}(G_{\lambda}, \overline{Q}) + {\rm Per}(G_{\lambda}, S)$.

{\color{blue} \sc Step 2:} Let $G_{\lambda}$ be a minimizer of $M_\lambda$.
We look to show that for large $\lambda > 0$ and $n > 0$, $G_{\lambda}$ will contain one of the points $x_n$, but nothing of the respective triangle $T_n$, in the measure sense, i.e. $\mm(G_{\lambda} \cap {T}_n) = 0$.
This means that $G_{\lambda}$ does not have an open representative.

Although it is not strictly needed in the following, we first notice that $\overline{Q} \subset G_{\lambda}$ as long as $\lambda > 0$ is large enough.
Next we will perform a reflection of the part of $G_{\lambda}$ that lies inside the triangles $T_n$ across the line $[0,1] \times \{0\}$.  Let $\widetilde{G}_{\lambda,n} = (G_{\lambda} \cap T_n) \cup \{(x,y)\subset \R^2: (x,-y)\in G_{\lambda} \cap T_n\}$. Now we estimate
\begin{equation}\label{reflect-isoper}
\begin{split}
{\rm Per}(G_{\lambda}\cap \overline{T}_n, \overline{T}_n) &\ge \frac{1}{2}{\rm Per}_{\rm euc}(\widetilde{G}_{\lambda,n};\R^2)\\
& \ge C\mathcal{L}^2(\widetilde{G}_{\lambda,n})^{\frac{1}{2}} \\
& = C'\mathcal{L}^2(G_{\lambda} \cap T_n)^{\frac{1}{2}},
\end{split}
\end{equation}
where ${\rm Per}_{\rm euc}$ denotes the Euclidean perimeter.
The first inequality follows since an admissible Lipschitz function for the definition of the perimeter on the left hand side will define an admissible Lipschitz function for the definition of the Euclidean perimeter on the right hand side via a reflection. For the second inequality 
we used the Euclidean isoperimetric inequality.
Now given $\lambda > 0$ and as long as $n > 0$ is large enough that $\mathcal{L}^2(G_{\lambda} \cap T_n)^{\frac{1}{2}} \le \frac{C'}{\lambda}$, it holds ${\rm Per}(G_{\lambda}\cap \overline{T}_n; \overline{T}_n) \ge \lambda\mathcal{L}^2(G_{\lambda} \cap T_n)$. Therefore, since $G_\lambda$ is a minimizer of $M_\lambda$, by Step 1  the set $G_\lambda \cap \overline{T}_n$ is a minimizer inside $\overline{T}_n$. Thus we
conclude that $\mathcal{L}^2(G_{\lambda} \cap T_n) = 0$. 
Since
${\rm Per}(\{x_n\},\overline{T}_n) = 0$
and $\mm(\{x_n\}) = 2^{-n}$, we have
$G_\lambda \cap \overline{T}_n= \{x_n\}$ up to measure zero sets.
This means in specific that $x_n \in \partial G_{\lambda} \cap G_{\lambda}$.
Since $\mm(\{x_n\}) = 2^{-n}$, the minimizer $G_{\lambda}$ contains boundary of positive measure and thus there is no open representative of $G_{\lambda}$.

Notice that the example above has a closed representative since we can always add in the boundary of the set $G_{\lambda}$ to itself since the dirac masses do not add perimeter, and the rest of the boundary is a null set with respect to $\mm$.
\end{example}

%{\color{red}Correct the remark below. What do we want to say?}

%\begin{remark}\label{rem:repr}
%In Example \ref{ex:positiveboundary} the boundary of the minimizer had positive measure. However, even in the case where the boundary has zero measure, we do not necessarily have that also the open representative is an extension set.
%This is because the spaces $BV(\Omega)$ and $BV(\overline{\Omega})$ might be different even if $\mm(\partial\Omega)=0$. 
%An example to keep in mind is the tripod with $\Omega$ consisting of two of the legs. If in the proof of Theorem \ref{thm:extensionsub} we start from $\overline{\Omega}$ as the first closed set, then it is also the minimal element in the partial order, and the minimizer of the functional.
%\end{remark}

\section{Remarks on quasiminimal sets in PI-spaces}\label{sec:PI}

As noted in the Introduction, in PI-spaces we can approximate a domain from inside and outside by uniform domains which are extension domains for $BV$ and Sobolev functions.
Therefore, we will focus here only on connecting our approach of the more general existence result obtained in Section \ref{sec:approximation} with other results on the structure of minimizers in PI-spaces. Here with a PI-space we mean a complete metric measure space $(\X,\sfd,\mm)$ where the measure is doubling and the space satisfies a local $(1,1)$-Poincar\'e inequality. 
Recall that a measure $\mm$ is doubling on $\X$ if there exists a constant $C>0$ so that for every $x \in \X$ and $r>0$ we have
\[
\mm(B(x,2r)) \le C \mm(B(x,r)).
\]
A metric measure space satisfies a local $(1,1)$-Poincar\'e inequality if there exist constants $C > 0$ and $\lambda \ge 1$ so that for every function $f$ in $\X$ with an upper gradient $g_f$, every $x \in \X$ and $r>0$ we have
\[
\int_{B(x,r)} |f-f_{B_r(x)}| \, \d\mm \le Cr \int_{B_{\lambda r}(x)}g_f\,\d\mm,
\]
where $f_A$ denotes the average of $f$ in a set $A \subset X$ of positive and finite measure.
The proof of Theorem \ref{thm:extensionsub} is based on the minimization of the functional
\[
M_\lambda \colon \{A \subset \Omega\} \to [0,+\infty] \colon A \mapsto \text{Per}(A) + \lambda \mm(\Omega \setminus A).
\]
If we replace the term $\lambda\mm(\Omega \setminus A)$
by $\lambda \mm(\Omega \Delta A)$ we obtain a more studied functional
\[
\widetilde M_\lambda \colon \{A \subset \X\} \to [0,+\infty] \colon A \mapsto \text{Per}(A) + \lambda \mm(\Omega \Delta A).
\]
A minimization of the functional $\widetilde M_\lambda$ leads to a set which is close in measure to $\Omega$, but not necessarily contained in $\Omega$. Still, the argument in the proof of Theorem \ref{thm:extensionsub} for showing that the minimizer is a $BV$-extension set works also for the functional $\widetilde M_\lambda$ provided that the minimizer has a closed representative (in order to use Lemma \ref{lma:perimetersum}). Since in general we do not know if the minimizer of $M_\lambda$ or $\widetilde M_\lambda$ has a closed representative, instead of using a global minimizer we took a minimal element in a decreasing chain of closed sets.
Recall that by Example \ref{ex:positiveboundary} we know that the minimizer need not have an open representative.

In PI-spaces we do have a closed representative for the global minimizer of $\widetilde M_\lambda$ in the class of Borel sets. This can be seen via the regularity results of quasiminimal sets. By \cite[Proposition 3.20 and Remark 3.23]{APP2022} we have that in PI-spaces the minimizer of the functional $\widetilde M_\lambda$ is locally $K$-quasiminimal in $\X$. 
Recall that a Borel set $E\subset \X$ is said to be $K$-quasiminimal, or to have $K$-quasiminimal boundary in an open set $\Omega \subset \X$, if for all open $U \Subset \Omega$ and every Borel sets $F,G \Subset U$ we have
\[
{\rm Per}(E,U) \le K {\rm Per}((E \cup F) \setminus G; U).
\]
A set $E$ is said to be locally $K$-quasiminimal in $\Omega$, if instead of requiring the minimality for all open $U \Subset \Omega$ we require that for every $x \in \Omega$ the exists an open neighbourhood $V \subset \Omega$ of $x$ so that for all $U \Subset V$ the above holds. 

By \cite[Theorem 4.2]{KKLS2013} a $K$-quasiminimal set in a PI-space has a representative for which the topological and measure theoretic boundaries agree.
Recall that the measure theoretic boundary of $E$ consists of those points where the (upper) density of both $E$ and $\X\setminus E$ are positive.
By a density point argument, the measure theoretic boundary has always measure zero. Consequently, a $K$-quasiminimal set has both an open and a closed representative. The proof of Theorem \ref{thm:extensionsub} then gives that the closed representative is a $BV$-extension set. However, as we will see in Example \ref{ex:quasicounter}, being a $BV$-extension set is not invariant under taking representatives, so we cannot conclude directly that the open representative is also a $BV$-extension set. 

Notice also that for the functional $M_\lambda$ we have the local $K$-quasiminimality only inside $\Omega$. Therefore, via \cite[Theorem 4.2]{KKLS2013} we only know that the topological boundary of the minimizer has measure zero inside $\Omega$. However, if we start with a domain $\Omega$ with $\mm(\partial\Omega)=0$, we can conclude that also the minimizer of $M_\lambda$ has both an open and a closed representative.

The above argumentation leads to natural questions: In a PI-space, is every domain with locally quasiminimal surface a $BV$-extension set? Is the closure of a domain with locally quasiminimal surface a $BV$-extension set?
We end this section with an example showing that the answer to the first question is negative.
In fact, the example shows that even the open representative of a minimizer of $\widetilde{M}_\lambda$ need not be a $BV$-extension set in a PI-space.
The same example also answers a question in \cite{KKLS2013}: domains with locally quasiminimal surface need not be local John domains in PI-spaces.

 Recall that a domain $\Omega$ is a local John domain if there exist constants $C, \delta > 0$ such that for every $x \in \partial \Omega$, every $0 < r < \delta$ and all $y \in B_r(x)\cap \Omega$ there exists a point $z \in B_{Cr}(x)\cap \Omega$ with $\sfd(y,z) \ge r/C$ and a curve $\gamma \subset \Omega$ such that
 \[
 \ell(\gamma_{y,w}) \le C{\rm dist}(w, \partial \Omega)
 \]
 for all $w \in \gamma$, where $\gamma_{y,w}$ is the shortest subcurve of $\gamma$ joining $y$ and $w$, and $\ell(\alpha)$ denotes the length of a curve $\alpha$.
A motivation for asking about the local John condition comes from the Euclidean setting, where David and Semmes showed that bounded sets with quasiminimal
boundary surfaces are locally John domains \cite{DS1998}.

\begin{example}\label{ex:quasicounter}
Consider the metric measure space $\X = \X_1 \cup \X_2 \cup \X_3$
where $\X_i = \{i\} \times [0,1]$ and for every $t \in [0,1]$ the points $(i,t,0)$, $i= \{1,2,3\}$ are identified. (Later on we will not always write the first coordinate that was above used only as a label.) Let us write the common part of $\X_i$ as $D = \X_1\cap \X_2 \cap \X_3$.
In other words, $\X = [0,1] \times \mathcal T$, with $\mathcal T$ being a tripod with unit length legs.
The distance $\sfd$ on $\X$ is the length distance on each \(\X_i\) given by 
\[
\sfd_{\X_i}(x,y) = |x_1-y_1|+|x_2-y_2|,
\]
and the reference measure $\mm$ is the sum of weighted Lebesgue measures on each $\X_i$: 
\[
\mm = 2\mathcal L^2|_{\X_1} + \mathcal L^2|_{\X_2 \cup \X_3}.
\]
The obtained metric measure space $(\X,\sfd,\mm)$ is an Ahlfors $2$-regular and satisfies the $(1,1)$-Poincar\'e inequality. 
We will consider a domain $\Omega \subset \X$ as $\Omega = \Omega_1 \cup \Omega_2 \cup \Omega_3$, where each $\Omega_i \subset \X_i$ is defined as follows. 
We start by defining as a basic building block a triangle
\[
T = \big\{(x_1,x_2) \in \mathbb R^2\,:\, x_2 > 0,\, x_2 < x_1<1-x_2
%x_2 < 1-x_1 
\big\}.
\]
Now, for the set \(\Omega_1\) in $\X_1$ we simply choose
\[
\Omega_1 \coloneqq \left([0,1]\times (0,1]\right) \cup J,
\]
the set  $\Omega_2\subset X_2$ is given by
\[
\Omega_2 \coloneqq  \bigcup_{k=0}^\infty \left(2^{-2k-1}T + (2^{-2k-1},0)\right)\cup J,
%\left(2^{-2k-1},2^{-2k}\right)\times \left(0,2^{-2k-1}\right) 
%\setminus \left[2^{-2k-1}+2^{-4k-3} , 2^{-2k}-2^{-4k-3}\right] \times \left[0,2^{-4k-3}\right] 
\]
and the set  $\Omega_3\subset X_3$ is 
given by
\[
\Omega_3 =  \bigcup_{k=0}^\infty \left(\left(2^{-4k-3}T+(2^{-2k-1},0)\right) \cup \left(2^{-4k-3}T + (2^{-2k} -2^{-4k-3},0)\right)\right)\cup J.
\]
The common part $J \subset D$ for the sets above is defined by
\[
J = \bigcup_{k=0}^\infty \left(\left(2^{-2k-1},2^{-2k-1}+2^{-4k-3}\right) \cup \left(2^{-2k} -2^{-4k-3},2^{-2k}\right) \right) \times \left\{0\right\}.
\]
See Figure \ref{fig:nonJohn} for an illustration of the domain $\Omega$.

 \begin{figure}
     \centering
     \includegraphics[width=0.6\textwidth]{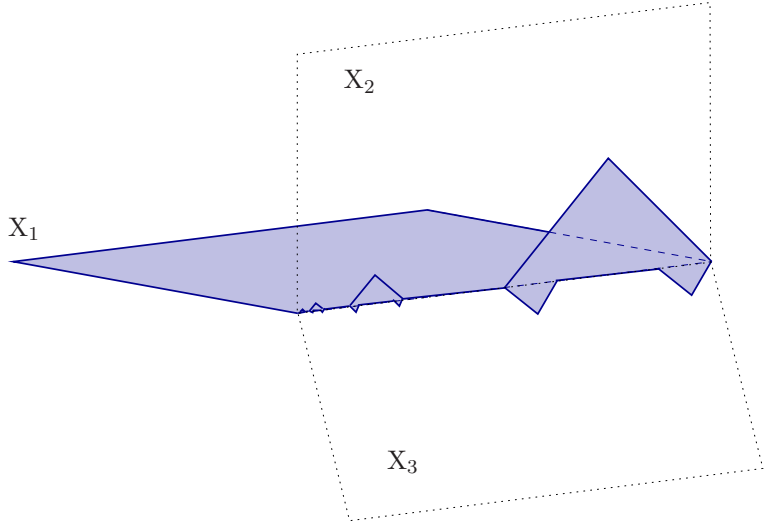}
     \caption{The domain $\Omega$ in Example \ref{ex:quasicounter} lives in three copies of the unit square, $\X_1,\X_2$, and $\X_3$, that are glued together at one edge. The domain minimizes $\widetilde M_\lambda$ and thus has locally quasiminimal surface. One intuitive way to see the quasiminimality is to observe that with local variations one cannot decrease the perimeter much when trying to remove the slits appearing at the common edge in the $\X_1 \cup \X_2$ square. The slits prevent the domain from being locally John or $BV$-extension domain.}
     \label{fig:nonJohn}
 \end{figure}

\noindent
{\color{blue} \sc Claim 1:} \textit{For any \(\lambda\geq 1\),
the domain 
%Let us first show that 
$\Omega$ is a minimizer of $\widetilde M_\lambda$ among Borel subsets of $\X$.}
%for $\lambda > 2$.
%\footnote{\color{red}Add more details to the proof as needed..} 
To prove it, we first show that 
\begin{equation}\label{eq:origperi}
\widetilde M_\lambda(\Omega) = {\rm Per}(\Omega) + \lambda\mm(\Omega \Delta \Omega) = {\rm Per}(\Omega) \le 2.
\end{equation}
This can be verified by simply taking as a sequence \((f_n)_n\)
of Lipschitz functions approaching to $\chi_\Omega$ in $L^1(\mm)$
whose elements \(f_n\)  are given by 
\[
f_n(x) = 1-\min\big(1, n \cdot {\rm dist}(x,\Omega)\big),\quad \text{ for every  }x\in \X.
\]
Then, denoting 
\[
\Omega^n\coloneqq \left\{x \in \X \,:\, 0 < {\rm dist}(x,\Omega) < \frac1n\right\},
\]
we have that 
\[
{\rm Per}(\Omega)\leq \liminf_{n\to +\infty}\int_{\Omega^n}{\rm lip}_a(f_n)\,\sfd \mm\leq \liminf_{n\to +\infty}\, n\cdot \mm(\Omega^n).
\]
Thus, it remains to estimate the measure of \(\Omega^n\).
Notice that by the choice of the distance $\sfd$ and the slopes in the triangle $T$, 
we have for \(k\in \{2,3\}\) that
\[\Omega^n\cap \X_k=\left\{(x,y+a)\in \X_k:\, (x,y)\in \partial \Omega\cap \X_k,\, a\in \Big(0,\frac{1}{n}\Big)\right \}.\]
Therefore, by using Fubini's theorem, 
we get for every \(n\geq 2\) that 
\[
\mm(\Omega^n)
=
\,\mm(\Omega^n\cap \X_2)+\mm(\Omega^n\cap \X_3) = \frac{2}{n}
\]
%where\footnote{I think a picture could help here}
%\[
%\begin{split}
%\mm(\Omega_n\cap \X_2)&=\sum_{k=0}^\infty\left(\frac{1}{n}+2^{-2k-2}\right)^2-2^{(-2k-2)\cdot 2} + \frac{2^{-2k-2}}{n}-\frac{1}{n^2},\\
%\mm(\Omega_n\cap \X_3)&=
%\sum_{k=0}^\infty
%2\left(\frac{1}{n}+2^{-4k-4}\right)^2-2^{(-4k-4)\cdot 2} + 2\left(\frac{2^{-2k-2}}{n}-\frac{1}{n^2}\right).
%\end{split}
%\]
%\[
%\mm\left(\left\{x \in \X \,:\, 0 < {\rm dist}(x,\Omega) < \frac1n\right\}\right) {\color{blue} \leq }\footnote{Precisely \(\frac{29}{15n}\)} \frac{2}{n}
%\]
%for all $n \ge 2$.
%Summing up we finally get that 
%\(\mm(\Omega_n)\leq \frac{2}{n}\)\footnote{Precisely \(\frac{29}{15}\), if I didn't do some mistake, please check.},
and accordingly that 
\({\rm Per}(\Omega)\leq 2\).
In order to conclude the proof of the Claim 1, we next show that for any $A \subset \X$ of finite perimeter we have
\begin{equation}\label{eq:anyperi}
 \widetilde M_\lambda(A) \ge 2.
\end{equation}
This follows by showing for any $f \in {\rm LIP}(\X)$
we have
\begin{equation}\label{eq:anyperif}
 \int_\X {\rm lip}_a(f) + \lambda|f-\chi_\Omega| \,\d \mm \ge 2.
\end{equation}
To show this, fix any  $x \in (0,1)$. Then, since \(\lambda\geq 1\), we have that
\[
 \int_{\{(1,x_1)\}\times (0,1)} {\rm lip}_a(f) + \lambda|f-\chi_\Omega| \,\d \mathcal H^1 \ge |f(1,x_1,0)-1|
\]
and for $k\in \{2,3\}$,
\[
 \int_{\{(k,x_1)\}\times (0,1)} {\rm lip}_a(f) + \lambda|f-\chi_\Omega| \,\d \mathcal H^1 \ge |f(k,x_1,0)|.
\]
Combining the above two estimates
and using again a Fubini-type argument taking the choice of our measure into account, we get
\[
\int_\X \lip_a(f)+\lambda |f-\chi_\Omega|\,\d \mm
\geq 2|f(1,x_1,0)-1|+|f(2,x_1,0)|+|f(3,x_1,0)|\geq 2, 
\]
recalling that the points \((i,t,0)\) for \(t\in (0,1)\) and \(i\in \{1,2,3\}\) are identified.
Hence we obtain \eqref{eq:anyperif} and thus \eqref{eq:anyperi}. This proves that $\Omega$ is a minimizer of $\widetilde M_\lambda$.
%\footnote{Here we should add a comment about what happens with the sets \(A\) that are not contained in \(\Omega\). Otherwise it is a minimizer of \(M_\lambda\) but defined on all Borel subsets of \(\Omega\). It would be better saying that it is a minimizer of \(\widetilde M_\lambda\) because that we have that it is also a quasiminimal set that we mention in the Remark below. Tapio: I wrote the Claim 1 now more precisely. I'm not sure if I understood this footnote correctly.}

\noindent
{\color{blue}\sc Claim 2:}
%Let us next show that 
\textit{$\Omega$ is not a $BV$-extension set nor a $\ca{BV}$-extension set.} Towards this, take $k \in \mathbb N$ and define
\[
E_k = 2^{-2k-1}T + (2^{-2k-1},0) \subset \Omega_2.
\]
Then 
\[
{\rm Per}_\Omega(E_k) \le  2^{-4k-2}
\quad \text{ and }\quad \mm(E_k) = 2^{-4k-4}.
\]
However, for any $\tilde E_k \subset \X$ with $\tilde E_k \cap \Omega = E_k$, by looking at the rectangle $\X_1\cup \X_2$, we see that
\[
{\rm Per}_\X(\tilde E_k) \ge 2^{-2k-1}.
\]
Consequently,
\[
\frac{{\rm Per}_\X(\tilde E_k)}{{\rm Per}_\Omega(E_k)} \ge \frac{2^{-2k-1}}{2^{-4k-2}} = 2^{2k+1} \to +\infty, \qquad \text{as }k \to +\infty
\]
and
\[
\frac{{\rm Per}_\X(\tilde E_k) + \mm(\tilde E_k)}{{\rm Per}_\Omega(E_k) + \mm(E_k)} \ge \frac{2^{-2k-1}}{2^{-4k-2} + 2^{-4k-4}} \ge 2^{2k} \to +\infty, \qquad \text{as }k \to +\infty,
\]
proving the claim that $\Omega$ is not a $BV$- nor $\ca{BV}$-extension domain.
(Notice, however, that $\overline{\Omega}$ is a $BV$-extension set.)

\noindent
{\color{blue} \sc Claim 3:}
\textit{$\Omega$ is not locally John domain.} To show  this, we take as the center $x := (0,0) \in \partial \Omega$. Given any $C\ge 1$ and $\delta>0$ we take $k \in \mathbb N$ large enough so that 
\[
 r_k := \sqrt{2}\cdot 2^{-4k-1} < \delta/C \qquad \text{and} \qquad
2^{2k+1} > C.
\]
Now, take $r = Cr_k$ and select $y = (2^{-2k-1}+2^{-2k-2},2^{-2k-3}) \in E_k \subset B_r(x)$. Notice that by the selection of $k$ we have $0 < r < \delta$.
Then
\[
E_k \subset B_{r_k}(y) = B_{r/C}(y),
\]
so the point $z$ in the John condition is forced to be selected outside $E_k$. Consequently, any curve $\gamma$ joining $y$ and $z$ in $\Omega$ must pass through a point
\[
w \in \left(\left(2^{-2k-1},2^{-2k-1}+2^{-4k-3}\right) \cup \left(2^{-2k} -2^{-4k-3},2^{-2k}\right) \right) \times \left\{0\right\} \subset J.
\]
We then have
\[
\frac{{\rm dist}(w,\partial\Omega)}{\ell(\gamma_{y,w})} \le
\frac{2^{-4k-4}}{2^{-2k-3}} = 2^{-2k-1} < \frac{1}{C},
\]
where in the last inequality we used again the selection of $k$. This contradicts the John condition with the given parameters $C$ and $\delta$.
\end{example}

\begin{remark}
Notice that as a minimizer of $\widetilde M_\lambda$ in a PI-space, the domain $\Omega$ of Example \ref{ex:quasicounter} also has quasiminimal surface. If we use as the measure $\mm$ in the example the 2-dimensional Hausdorff measure, we have that the space $(\X,\sfd,\mm)$ is isotropic. (Let us recall that a metric measure space is isotropic whenever the density function \(\theta_E\) associated with the set of finite perimeter \(E\) 
and for which it holds that \({\rm Per}(E,\cdot)=\theta_E\mathcal H\restr{\partial^eE}\)
is independent on the set \(E\) itself. We refer to \cite{Ambrosio2002} for more details about the mentioned density function.)
Since the property of being quasiminimal is invariant under a change of the reference measure to a comparable one, we will thus obtain a version of the example where the space is isotropic, but the domain only has quasiminimal surface instead of being a minimizer of $\widetilde M_\lambda$. Notice also, that changing to a distance $\sfd$ induced by the Euclidean distances in $\X_i$ we also preserve the quasiminimality, since the change in distance is bi-Lipschitz.
\end{remark}

\section{Open questions}\label{sec:q}

Our extension result leads to several questions that we have not yet been able to answer. In Theorem \ref{thm:extensionsub} we proved that we can approximate domains from inside by closed $BV$-extension sets. For the special case of PI-spaces, 
in Section \ref{sec:PI}, we noted that minimizers of $\widetilde M_\lambda$ have also open representatives.
However, Example \ref{ex:quasicounter} showed that the open representatives need not be $BV$-extension sets even in PI-spaces. What still remained open is if being a minimizer of $\widetilde M_\lambda$ is really needed or if having just quasiminimal surface is enough:
%What was left open in this case was:
%\begin{question}\label{q:1}
%Let $(\X,\sfd,\mm)$ be a PI-space and $\Omega \subset \X$ %a bounded domain.
%Let $E$ be a minimizer of $\widetilde M_\lambda$ among Borel subsets of $\X$. Is the open representative of $E$ a $BV$-extension set?
%\end{question}
%In Section \ref{sec:PI} we gave a counter example for domains with quasiminimal surface to be $BV$-extension domains. However, the closure of the domain in our example is a $BV$-extension set and thus leaves open:

\begin{question}
Let $(\X,\sfd,\mm)$ be a PI-space and $\Omega \subset \X$ a bounded domain with locally $K$-quasiminimal surface. Is then
$\overline{\Omega}$ a $BV$-extension set?
\end{question}

%Without the PI-assumption the topological boundary of these closed sets do not need to have zero measure as we observed in Example \ref{ex:positiveboundary}.
%However, in this example the topological interior of the closed set is still a $BV$-extension set and as we increase $\lambda$ the measure of the interior still approaches the measure of the original set. Thus, regardless of having or not having open representatives one can ask:

%\begin{question}
%Let $(\X,\sfd,\mm)$ be a metric measure space and $\Omega \subset \X$ a bounded domain with finite perimeter in $\X$ and with $\mm(\partial\Omega) =0$.
%Let $E$ be a minimal element with respect to $\prec_\lambda$ among closed subsets of $\overline{\Omega}$. Is the topological interior of $E$ a $BV$-extension set?
%\end{question}

Another question stemming from the proof of Theorem \ref{thm:extensionsub} is if we really need to take the partial order into use to guarantee that the minimal element has a closed representative.

\begin{question}\label{q:closedrep}
Let $(\X,\sfd,\mm)$ be a metirc measure space and $\Omega \subset \X$ a bounded domain.
Let $E$ be a minimizer of $M_\lambda$ (or $\widetilde M_\lambda$) among Borel subsets of $\Omega$ (or $\X$ respectively). Does $E$ have a closed representative?
\end{question}

For PI-spaces the answer to Question \ref{q:closedrep} is positive for $\widetilde M_\lambda$, see again Section \ref{sec:PI}.

Independent of the minimization approach, the obvious question still remaining is:

\begin{question}
 Let $(\X,\sfd,\mm)$ be a metric measure space, $\Omega \subset \X$ a bounded domain and $\varepsilon > 0$.
 Does there exist a $BV$-extension domain $A \subset \Omega$ such that $\mm(\Omega\setminus A)< \varepsilon$?
\end{question}

None of our approximations is from outside because we argue that the minimizer is an extension set by comparing the value of the functional to value at a modification of the minimizer where we take away an open subset. 

\begin{question}
 Let $(\X,\sfd,\mm)$ be a metric measure space, $\Omega \subset \X$ a bounded domain and $\varepsilon > 0$.
 Does there exist a $BV$-extension domain (or just a $BV$-extension set) $A \supset \Omega$ such that $\mm(A \setminus \Omega)< \varepsilon$?
\end{question}

In addition to knowing the answer to the above questions, it would be interesting to see if we can also approximate domains by Sobolev $W^{1,p}$-extension domains in the absence of the local Poincar\'e inequality. In particular, the case $p=1$ is intimately connected to the $BV$ and perimeter extensions even in general metric measure spaces \cite{CKR2023}.

\end{document}